\documentclass[11pt]{article}
\usepackage[T1]{fontenc}
\usepackage[utf8]{inputenc}
\usepackage{lmodern}
\usepackage[margin=1in]{geometry}
\usepackage{amsmath,amssymb,amsthm,mathtools}
\usepackage{microtype}
\usepackage[colorlinks=true,linkcolor=blue,citecolor=blue,urlcolor=blue]{hyperref}
\usepackage{enumitem}
\usepackage{needspace}
\setlist{itemsep=3pt,topsep=5pt}
\newtheorem{theorem}{Theorem}[section]
\newtheorem{proposition}[theorem]{Proposition}
\newtheorem{lemma}[theorem]{Lemma}
\newtheorem{corollary}[theorem]{Corollary}
\theoremstyle{definition}
\newtheorem{definition}[theorem]{Definition}
\newtheorem{example}[theorem]{Example}
\theoremstyle{remark}
\newtheorem{remark}[theorem]{Remark}
\newcommand{\F}{\mathcal F}
\newcommand{\HH}{\mathcal H}
\newcommand{\PP}{\mathbb P}
\newcommand{\CC}{\mathbb C}
\newcommand{\Vol}{\operatorname{Vol}}
\newcommand{\Sing}{\operatorname{Sing}}
\title{Bounded Leaf Volumes and Analytic Quotients\\of Singular Holomorphic Foliations}
\author{Carlos Eduardo Mart\'inez \and Alberto Verjovsky}
\date{}
\begin{document}
\maketitle

\begin{center}
\textit{To the memory of Dr. Rodolfo Patricio Mart\'inez y Romero
(1950--2024)}

\end{center}

\begin{abstract}
Let $\F$ be a singular holomorphic foliation of dimension $k$ on a compact K\"ahler manifold $M$ of dimension $n$. Suppose that the volumes of its regular leaves are uniformly bounded, $\dim_{\CC}\Sing\F<k$, and the union of the leaf closures meeting $\Sing\F$ has finite $2k$-dimensional Hausdorff measure. We prove that the quotient by chains of intersecting leaf closures is a normal compact complex analytic space of dimension $n-k$. The quotient map is proper and holomorphic, with connected fibers. The proof uses Bishop's theorems, Remmert--Stein extension, Barlet's cycle space, and Stein factorization. Away from the images of separatrices, the quotient has local charts given by finite holonomy groups. Examples show that these images need not coincide with the singular set of the quotient, and that bounded leaf volumes alone do not imply Hausdorffness. In codimension one, Malgrange's theorem makes the separatrix-measure assumption automatic when $\operatorname{codim}\Sing\F\ge3$.
\end{abstract}

\noindent\small\textit{2020 Mathematics Subject Classification.}
Primary 32S65; Secondary 32M25, 32C25, 32Q15.\par
\noindent\textit{Keywords.} Singular holomorphic foliations, bounded leaf volumes,
analytic quotients, K\"ahler manifolds, separatrices, holonomy, Bishop's theorem,
singular Frobenius theorem.\par\normalsize

\section{The global quotient on a compact K\"ahler manifold}

Let $M$ be a connected compact K\"ahler manifold of complex dimension $n$, with K\"ahler form $\omega$. Write $\mathcal O_M$ for its sheaf of holomorphic functions and $\mathcal T_M$ for its sheaf of holomorphic vector fields. A \emph{singular holomorphic foliation} $\F$ of rank $k$, where $0<k<n$, is a coherent subsheaf $\mathcal T_{\F}\subset\mathcal T_M$ of generic rank $k$ such that
\[
 [\mathcal T_{\F},\mathcal T_{\F}]\subset\mathcal T_{\F},
 \qquad \mathcal T_M/\mathcal T_{\F}\ \text{is torsion-free}.
\]
\emph{A coherent sheaf} is locally finitely generated, and the relations among any finite family of local generators are locally finitely generated. The involutivity/integrability condition means that $[X,Y]$ is tangent to the foliation whenever $X$ and $Y$ are local sections of $\mathcal T_{\F}$, this guarantees the existence of tangent manifolds by the holomorphic Frobenius theorem. The torsion-free condition gurantees that, for germs $f\in\mathcal O_{M,x}$ and $v\in\mathcal T_{M,x}$, $f\ne0$ and $fv\in\mathcal T_{\F,x}$ imply $v\in\mathcal T_{\F,x}$, usually a subsheaf with this property is called \emph{saturated}.

The singular set $S=\Sing(\F)$ is the analytic set where $\mathcal T_M/\mathcal T_{\F}$ is not locally free, its (real) codimension is at least two, although we will use complex dimensions and codimensions throughout the rest of the article. On $M^\circ=M\setminus S$, the tangent sheaf of the foliation is a holomorphic subbundle of rank $k$. The holomorphic Frobenius theorem gives local coordinates $(z,w)\in\CC^k\times\CC^{n-k}$ in which it is spanned by $\partial/\partial z_1,\ldots,\partial/\partial z_k$. We call the local a \emph{foliated chart} and sets $w=\mathrm{constant}$ are called \emph{plaques}. A regular leaf is a maximal connected immersed complex submanifold tangent to this subbundle; its dimension is $k$, and the codimension of the foliation is $n-k$.

The \emph{conormal sheaf} consists of holomorphic one-forms annihilating $\mathcal T_{\F}$. In codimension one it has a local generator $\alpha$ whose coefficients have no common divisorial factor. Then $S=\{\alpha=0\}$ locally, and integrability is equivalent to $\alpha\wedge d\alpha=0$. For higher-codimension Pfaff systems, see Section~\ref{sec:malgrange}.

Throughout this paper, a leaf means a regular leaf, since $M$ is K\"ahler, its volume is calculated as
\[
 \Vol_{2k}(L)=\frac1{k!}\int_L\omega^k.
\]
All closures are taken in $M$ unless another ambient space is specified. Write $A_L=\overline L$.

\begin{definition}
or $x,y\in M$, write $x\sim y$ if $x=y$, or if there are regular leaves $L_1,\ldots,L_m$ such that
\[
 x\in A_{L_1},\qquad y\in A_{L_m},\qquad
 A_{L_i}\cap A_{L_{i+1}}\ne\varnothing\quad(1\leq i<m).
\]
The space $Q=M/{\sim}$ is equipped with the quotient topology, and $q:M\to Q$ denotes the quotient map. The full separatrix set is
\[
 \Sigma=\bigcup_{A_L\cap S\ne\varnothing}A_L.
\]
\end{definition}

\Needspace{22\baselineskip}
\begin{theorem}[Global Hausdorff quotient]\label{thm:main}
Let $(M,\omega)$ be a connected compact K\"ahler manifold of complex dimension $n$, and let $\F$ be a possibly singular holomorphic foliation of complex leaf dimension $k$, where $0<k<n$. Let $S=\Sing(\F)$, let $\Sigma$ be the union of all regular leaf closures meeting $S$, and let $\sim$ be the equivalence relation generated by intersecting leaf closures in $M$. Suppose that
\begin{enumerate}[label=\textup{(\roman*)}]
\item there is $C<\infty$ such that $\Vol_{2k}(L)\leq C$ for every regular leaf;
\item $\dim_{\CC}S<k$;
\item $\HH^{2k}(\Sigma)<\infty$, for the metric induced by $\omega$.
\end{enumerate}
Then $\Sigma$ is a compact pure $k$-dimensional analytic subset, unless it is empty, and it contains $S$. Its connected components are precisely the equivalence classes meeting $S$. Every other equivalence class is a compact regular leaf. The relation $\sim$ is closed in $M\times M$, and $Q$ is compact Hausdorff.
\end{theorem}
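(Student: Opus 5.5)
The plan has four stages: make each leaf closure analytic, show $\Sigma$ is analytic and contains $S$, identify the equivalence classes, and finally prove that $\sim$ is closed.

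\textbf{Stage 1: each leaf closure is analytic.} Fix a regular leaf $L$. On $M^\circ=M\setminus S$ the leaf is a closed complex submanifold of the open set $M^\circ\setminus(\overline L\setminus L)$, and it has finite volume by (i). My first goal is to show that $\overline L\cap M^\circ$ is a pure $k$-dimensional analytic subset of $M^\circ$. I would argue in a foliated chart $(z,w)$. Any plaque of $L$ in the chart is a graph $w=c$, and the $2k$-volume of each plaque is bounded below uniformly. Bounded volume of $L$ therefore allows only finitely many plaques of $L$ in a slightly shrunken chart. Hence $\overline L$ meets the chart in finitely many plaques $\{w=c_j\}$, and $\overline L\cap M^\circ$ is locally a finite union of plaques. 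In particular $\overline L\cap M^\circ$ is analytic, each of its points lies on a leaf, and every leaf contained in $\overline L\cap M^\circ$ is closed in $M^\circ$.

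The closure in $M$ adds at most $S$, which has dimension $<k$ by (ii). The Remmert--Stein extension theorem then shows that $A_L=\overline L$ is a compact pure $k$-dimensional analytic subset of $M$. Bishop's theorem, using the volume bound, gives the same conclusion. The finite-plaque argument also shows that a leaf not meeting $S$ in its closure is compact: $A_L\subset M^\circ$ is a finite union of plaques locally, and each such plaque belongs to a leaf closed in $M^\circ$, hence to $L$ itself.

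\textbf{Stage 2: $\Sigma$ is analytic and contains $S$.} I would apply Bishop's compactness theorem to the family $\{A_L: A_L\cap S\neq\varnothing\}$. Its members are $k$-cycles with uniformly bounded mass, since $\omega$ is closed and the volume is bounded by $C$. Such a family is relatively compact in Barlet's cycle space.

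Hypothesis (iii) should then force this family to be finite. Distinct closures $A_L$ share no $k$-dimensional component, because they are different leaf closures. Each $A_L$ has $\HH^{2k}$-measure at least a positive constant $\epsilon$, by the monotonicity formula near a point of $S$. Consequently $\HH^{2k}(\Sigma)\ge N\epsilon$ whenever $\Sigma$ contains $N$ distinct closures, so finiteness of $\HH^{2k}(\Sigma)$ bounds their number. A finite union of compact pure $k$-dimensional analytic sets is again one, so $\Sigma$ is analytic.

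For $S\subset\Sigma$, take $p\in S$. Since $\dim S<k$ and $S$ is nowhere dense, leaves accumulate at $p$. A limit in the cycle space of closures of leaves passing near $p$ gives a cycle through $p$. I expect this step to be the main obstacle: the limit must be shown to be a union of closures of leaves meeting $S$. My first attempt would use the fact that a cycle-space limit of leaf closures is again invariant, i.e. a union of leaf closures and points of $S$. Since it has pure dimension $k>\dim S$, its components through $p$ are leaf closures meeting $S$.

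\textbf{Stage 3: the equivalence classes.} Two closures meeting $S$ that intersect lie in the same connected component of $\Sigma$. Conversely, each connected component of $\Sigma$ is a finite union of such closures, which are chained by intersections, so the components are exactly the classes meeting $S$. Any other closure $A_L$ avoids $S$ and is therefore a compact leaf by Stage 1. Two distinct compact leaves are disjoint, and such a leaf cannot meet $\Sigma$, since otherwise it would lie inside a closure containing a point of $S$. Hence every class not meeting $S$ is a single compact leaf.

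\textbf{Stage 4: closedness of $\sim$.} Take $x_j\sim y_j$ with $x_j\to x$ and $y_j\to y$. If infinitely many of the classes meet $S$, then, because there are only finitely many such classes and each is compact, we may pass to a constant class and obtain $x\sim y$. Otherwise $x_j$ and $y_j$ lie on compact leaves $L_j$. These have bounded volume, so a subsequence converges in Barlet's space to a cycle $Z$ containing $x$ and $y$. As above, $Z$ is a union of leaf closures. The connectedness of the $L_j$ passes to the limit, so the support of $Z$ is connected. Therefore $Z$ lies in a single equivalence class, which gives $x\sim y$.

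This proves that $\sim$ is closed in $M\times M$. Since $M$ is compact and the relation is a closed equivalence relation, the quotient $Q$ is compact Hausdorff.
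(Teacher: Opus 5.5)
Your proposal is correct and follows the paper's overall architecture. The shared steps are: analytic leaf closures with $A_L\setminus S=L$ (Lemma~\ref{lem:closures}); a limit lemma showing that Bishop limits of leaf closures are finite unions of leaf closures (Lemma~\ref{lem:limit}), hence $S\subset\Sigma$; and the same two-case proof that $R$ is closed. The paper settles the step you flagged just as you suggest. Plaques of the $A_{L_j}$ converge to plaques, so the limit is saturated and locally a finite union of plaques on $M^\circ$. By (ii) no irreducible component lies in $S$, and each component minus $S$ is connected, hence a single leaf. So the limit is a finite union of leaf closures, which is exactly what your chain argument in Stage~4 needs.

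The genuinely different step is the analyticity of $\Sigma$. You count separatrix closures globally. Two of them meet only in the $\HH^{2k}$-null set $S$, and each has volume at least a uniform $\epsilon>0$ by Lelong monotonicity. So (iii) bounds their number, and $\Sigma$ is a finite union of compact analytic sets. The paper counts locally instead. $\Sigma\cap M^\circ$ is saturated, and each plaque in a shrunken foliation box carries a fixed amount of volume, so (iii) leaves finitely many plaques per box. Then $\Sigma\setminus S$ is closed analytic in $M^\circ$, and Bishop extension (Theorem~\ref{thm:extension}) makes its closure analytic. Corollary~\ref{cor:covered} identifies that closure with $\Sigma$, and finiteness of the family comes afterwards from the irreducible decomposition.

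Your route is shorter and proves analyticity of $\Sigma$ without first needing $S\subset\Sigma$. Its cost is a monotonicity bound with radius and constant uniform over the compact set $S$, obtained from finitely many charts comparing $\omega$ with the Euclidean metric. The paper uses only elementary plaque-volume bounds in fixed charts. Likewise, your use of Remmert--Stein for $A_L$ spends hypothesis (ii), while the paper's Bishop extension yields Lemma~\ref{lem:closures} without it.
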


Condition (i) bounds the volume of each entire leaf. Condition (iii) requires finite measure of the union of all leaf closures meeting $S$.

These hypotheses retain the two volume assumptions of the local Alexander--Verjovsky theorem \cite{AV}, with the isolated-singularity assumption replaced by $\dim_{\CC}S<k$. Theorem~\ref{thm:analytic} gives $Q$ a normal complex analytic structure.

When $S$ is empty, the two additional assumptions are vacuous. The conclusion agrees with the bounded-volume criterion for compact regular foliations in \cite{Epstein,EMS}. Section 9 of Mart\'inez--Verjovsky \cite{VM} treats regular holomorphic foliations on compact K\"ahler manifolds. A limit of leaf closures may be reducible. The inequality $\dim S<k$ ensures that each component is itself a leaf closure, and connectedness joins these components by a chain of intersections.

Reiffen \cite{Reiffen} uses the same chains of intersecting closures to define singular leaf spaces. His codimension-one integrability criterion is discussed in Remark~\ref{rem:reiffen}.

Malgrange's singular Frobenius theorems \cite{MalgrangeI,MalgrangeII} give local first integrals under codimension assumptions on the singular set of a Pfaff system. For a codimension-one foliation with $\operatorname{codim}S\ge3$, these local first integrals and the uniform volume bound give a global quotient onto a compact Riemann surface (Corollary~\ref{cor:malgrange-global}).

\section{Bishop's theorems and analytic leaf closures}

The following forms of Bishop's results are used; see \cite{Bishop,Chirka,Stolzenberg,VM} and the statements Bishop~1 and Bishop~3 in \cite[Section~3, pp.~4--5]{AV}. Local Euclidean volume and the volume of a smooth Hermitian metric are comparable on relatively compact coordinate charts.

\begin{theorem}[Finite-volume extension]\label{thm:extension}
Let $B$ be an analytic subset of a complex manifold $U$, and let $V$ be a closed pure $k$-dimensional analytic subset of $U\setminus B$. If $V$ has locally finite $2k$-volume up to $B$, then its closure in $U$ is a pure $k$-dimensional analytic subset.
\end{theorem}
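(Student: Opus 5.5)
Since the statement is local on $U$ and analyticity is a local property, I would fix a point $a\in B\cap\overline V$ and argue in a small coordinate ball around $a$. I would induct on $\dim B$, using the stratification of $B$ into $B\setminus\Sing B$ and $\Sing B$. The plan is first to prove the theorem when $B$ is a connected complex submanifold near $a$. The general case then follows in two stages: apply the manifold case on $U\setminus\Sing B$ to extend $V$ across $B\setminus\Sing B$, then apply the induction hypothesis with the lower-dimensional analytic set $\Sing B$ in place of $B$. The volume hypothesis is inherited at each stage, because the newly added set $\overline V\cap(B\setminus\Sing B)$ carries no $2k$-volume beyond that of $V$ once we know it lies in a $k$-dimensional analytic set.

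For the manifold case, the first step is a uniform density estimate. Wirtinger's inequality makes $V$ a minimal surface, so the monotonicity formula applies. Combined with the local volume bound, it gives a uniform upper bound on the ratios $\Vol_{2k}(V\cap B(x,r))/r^{2k}$ for $x\in V$ near $a$. From this I would deduce that $\HH^{2k}(\overline V\cap K)<\infty$ for compact $K$, and that $\overline V$ has no ``wild'' accumulation on $B$.

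The second step is to choose a linear projection $\pi:\CC^n\to\CC^k$, generic in the Grassmannian, such that $\overline V\cap\pi^{-1}(0)\cap\overline{B(a,r)}=\{a\}$ for a suitable small $r$. Here a Fubini-type averaging of the finite $2k$-measure of $\overline V$ over the fibres of projections shows that almost every fibre meets $\overline V$ in a finite set. Shrinking to a polydisc $\Delta'\times\Delta''$, $\pi$ becomes proper on $\overline V\cap(\Delta'\times\Delta'')$. The finite-volume bound then bounds the sheet number, so $V$ is a branched covering over $\Delta'\setminus\pi(\overline V\cap B)$. The third step is to form the canonical defining functions: the elementary symmetric functions of the coordinates of the points in a fibre, or equivalently the coefficients of $\prod_j\bigl(\langle \ell,z-z^{(j)}\rangle\bigr)$. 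These are bounded and holomorphic off $\pi(\overline V\cap B)$, and I would extend them holomorphically to $\Delta'$. Their common zero set is an analytic set containing $\overline V$ and equal to it off $B$, and taking the union of its $k$-dimensional components that meet $V$ identifies $\overline V$.

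I expect the main obstacle to be the extension in the third step when $\dim B\ge k$. Then $\pi(\overline V\cap B)$ need not be a thin set in $\Delta'$, and Riemann extension does not apply directly. I would first try to show that $\HH^{2k}(\overline V\cap B)=0$. This would follow from the density estimate together with the facts that $V\cap B=\varnothing$ and $V$ has locally finite measure up to $B$. Then I would note that $\pi(\overline V\cap B)$ has zero $2k$-measure, so it is a closed set of Lebesgue measure zero in $\Delta'$. Bounded holomorphic functions extend across such sets only if they are also continuous there, or if the set is pluripolar. So the step to check carefully is to replace $B$ by the smaller set $\overline V\cap B$ and show that the latter is locally contained in an analytic set of dimension $<k$. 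For that I would try Bishop's original device: slice by generic complex $(n-k+1)$-planes through the points of $B$, reduce to the case $k=1$ where the statement is classical via Shiffman's theorem, and reassemble using the induction on $\dim B$. Once $\overline V\cap B$ is known to lie in an analytic set of dimension $<k$, the Remmert--Stein theorem finishes the argument.
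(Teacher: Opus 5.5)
The paper does not prove this statement; it quotes Bishop's extension theorem from Bishop, Chirka and Stolzenberg, so your proposal has to stand on its own. The frame is sound: the stratification induction, the proper projection with canonical defining functions, and the closing appeal to Remmert--Stein. However, the decisive step is never supplied.

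\textbf{The central step is deferred.} Showing that $\overline V\cap B$ lies locally in an analytic set of dimension $<k$ is equivalent to the theorem, since once $\overline V$ is analytic, $\overline V\cap B$ is nowhere dense in it. The slicing route you sketch does not deliver this. Shiffman's theorem needs an $\HH^{2k-1}$-null exceptional set, i.e.\ $\HH^1$-null for curves, whereas at best you have $\HH^{2k}(\overline V\cap B)=0$. Also, extending a slice $V\cap P$ only controls $\overline{V\cap P}$, which can be strictly smaller than $\overline V\cap P$.

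\textbf{Steps 1--3 never use that $B$ is complex analytic, and they cannot suffice without it.} In $\CC^2$ take $B=\{|z|=1,\ w=0\}$ and $V=\{|z|<1,\ w=0\}$. Then $V$ is closed in $\CC^2\setminus B$, has finite area and bounded density ratios, and $\HH^2(\overline V\cap B)=0$. The projection to the $z$-line is proper with bounded sheet number. Yet $\overline V$ is not analytic: the sheet number jumps from $1$ to $0$ across $\pi(B)$.

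\textbf{The missing idea is that $V$ has no boundary along $B$.} Precisely, the trivial extension $\tilde T$ of the integration current $[V]$ is $d$-closed on $U$. This is the Skoda--El Mir theorem, proved with cut-offs built from the plurisubharmonic function $\log\sum_j|f_j|^2$, where the $f_j$ locally define $B$. It is exactly where analyticity of $B$ and the volume bound are used together.

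This also repairs Step~1, which has two problems. Its monotonicity argument is valid only on balls disjoint from $B$. Its upper density bound points the wrong way: to get $\HH^{2k}(\overline V\cap B)=0$ you need $\Vol_{2k}(V\cap B(x,r))\ge c\,r^{2k}$ at points $x\in\overline V\cap B$. With $d\tilde T=0$ the argument goes through:
\begin{itemize}
\item Monotonicity holds at every point, and upper semicontinuity of Lelong numbers gives $\nu(\tilde T,x)\ge1$ on $\overline V$.
\item Since $\tilde T$ puts no mass on $B$, the density theorem yields $\HH^{2k}(\overline V\cap B)=0$.
\item For Step~2, note that a.e.\ fibre of a fixed projection says nothing about the fibre through $a$. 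Instead, average over $(n-k)$-planes through $a$: use Shiffman's lemma for the $\HH^{2k}$-null set $\overline V\cap B$, and a Crofton count bounded via the Lelong--Jensen formula. This gives the plane needed for properness.
\end{itemize}

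Step~3 then needs no removability at all. The push-forward $\pi_*(\ell^m\tilde T)$ is a $\bar\partial$-closed distribution on $\Delta'$, hence holomorphic. Off $\pi(\overline V\cap B)$ it equals the $m$-th power sum of $\ell$ over the fibres. Likewise $\pi_*\tilde T$ is a constant sheet number. So your canonical defining functions extend and cut out $\overline V$. Alternatively, Siu's theorem shows directly that $\overline V=\{x:\nu(\tilde T,x)\ge1\}$ is analytic.
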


\begin{theorem}[Compactness]\label{thm:bishop}
Let $A_j$ be nonempty compact pure $k$-dimensional analytic subsets of a compact Hermitian manifold, with uniformly bounded volumes. A subsequence converges in the Hausdorff topology to a nonempty compact pure $k$-dimensional analytic subset $A$. If each $A_j$ is connected, then $A$ is connected.
\end{theorem}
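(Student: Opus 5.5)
We would split the statement into a purely metric part and an analytic part. For the metric part, the space of nonempty compact subsets of the compact metric space $(M,d_\omega)$ is compact in the Hausdorff metric (Blaschke's selection theorem). So a subsequence $A_j$ converges to a nonempty compact set $A$, and $A$ is the set of limits of sequences $x_j\in A_j$. Connectedness passes to the limit by the standard argument. Suppose $A=F_1\sqcup F_2$ with $F_1,F_2$ nonempty, closed and disjoint, and let $\delta=d(F_1,F_2)/3$. Then for large $j$, $A_j$ lies in the $\delta$-neighbourhood of $A$, which is the disjoint union of the $\delta$-neighbourhoods of $F_1$ and $F_2$. Since $A_j$ is $\delta$-close to $A$, it meets both of them. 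This disconnects $A_j$, a contradiction. The remaining work is to show that $A$ is analytic of pure dimension $k$.

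The first analytic input is a uniform lower density bound. Work in finitely many relatively compact coordinate charts, where the Hermitian volume is comparable to the Euclidean one (as noted before Theorem~\ref{thm:extension}). Lelong's monotonicity theorem for pure $k$-dimensional analytic sets through the centre of a Euclidean ball gives $\Vol_{2k}(A_j\cap B(x,r))\ge c\,r^{2k}$ for $x\in A_j$ and $r<r_0$, with $c,r_0$ independent of $j$. We combine this with the uniform bound $\Vol(A_j)\le C$ and Hausdorff convergence. A covering argument with disjoint balls then gives two conclusions: the number of $r$-balls needed to cover $A$ is $O(r^{-2k})$, so $\HH^{2k}(A)<\infty$ and in particular $\HH^{2k+1}(A)=0$; and every point of $A$ carries a definite amount of limiting volume, namely $\liminf_j \Vol(A_j\cap B(x,r))\ge c\,r^{2k}$. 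Hence $A$ is small enough for generic linear projections to be proper near each of its points.

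The main step, and the expected obstacle, is Bishop's local argument. Fix $a\in A$. Since $\HH^{2k+1}(A)=0$, a generic choice of local coordinates $(z',z'')\in\CC^k\times\CC^{n-k}$ centred at $a$ and a small polydisc $U'\times U''$ satisfy $A\cap(\overline{U'}\times\partial U'')=\varnothing$. By Hausdorff convergence, the same holds for $A_j$ when $j$ is large. Then each $A_j\cap(U'\times U'')\to U'$ is a proper projection, hence a branched covering whose degree $d_j$ is bounded by a constant times $\Vol(A_j)/\Vol(U')$, so $d_j\le N$ uniformly. We would encode $A_j$ locally by canonical defining functions. For each linear form $\ell$ in $z''$ from a finite family, take the elementary symmetric functions of the values of $\ell$ on the fibres. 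These are holomorphic on $U'$ by the Riemann extension theorem across the branch locus, and they are uniformly bounded because $U''$ is bounded. Passing to a further subsequence with a fixed degree $d$, Montel's theorem gives locally uniform convergence of these coefficients. The limiting Weierstrass-type polynomials cut out an analytic set $A'$ that is again a branched $d$-sheeted covering of $U'$, so $A'$ is pure $k$-dimensional. The remaining point, which needs care, is that $A'=A\cap(U'\times U'')$, and here we would use the continuity of roots. Roots of the limit polynomials are limits of roots of the approximating ones, which gives $A'\subset A$. Conversely, each point of $A$ is a limit of points of $A_j$ whose coordinates are roots of the approximating polynomials, which gives $A\subset A'$; this is where the bound $d_j\le N$ is essential. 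Finally, points of $A$ are never isolated from the $k$-dimensional part, by the lower density bound, so $A$ is a compact pure $k$-dimensional analytic subset of $M$.
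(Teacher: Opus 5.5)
Your metric part (Blaschke selection, and the $\delta$-neighbourhood argument for connectedness) is the same as the paper's. For analyticity the paper gives no argument of its own. It invokes Bishop's local compactness theorem on a finite coordinate cover, and remarks only that the Hausdorff limit is the support of the limiting cycle. You instead reconstruct that local theorem along Bishop's original lines:
\begin{itemize}
\item Lelong monotonicity gives $\HH^{2k}(A)<\infty$.
\item An Eilenberg-type integral-geometric lemma turns $\HH^{2k+1}(A)=0$ into a proper projection: almost every sphere about $a$ in almost every $(n-k)$-plane through $a$ misses $A$.
\item The projection inequality turns the volume bound into the uniform sheet bound $d_j\le N$.
\item Montel's theorem, applied to canonical defining functions, produces the limit.
\end{itemize}
The citation is short and relies on the literature. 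Your route is self-contained and shows exactly where the volume bound enters, namely only through $d_j\le N$. It does, however, rest on several standard lemmas you state without proof, and two steps need tightening.

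First, the phrase ``roots of the limit polynomials are limits of roots'', applied one $\ell$ at a time, gives less than you need. For each $\ell$ it yields some fibre points of $A_j$ over $z'$ whose $\ell$-values tend to $\ell(z'')$. It does not yield a single sequence of fibre points tending to $z''$. To fix this, pass to a subsequence along which the fibres of $A_j$ over $z'$ converge to a finite set $T$. Choose the forms in general position: at least $N(n-k-1)+1$ of them, with any $n-k$ linearly independent, so that any point can be separated from any $N$ others. Then $z''\notin T$ would give an $\ell$ with $\ell(z'')\notin\ell(T)$, a contradiction. With only the coordinate forms this fails: a limit fibre $\{(0,0),(1,1)\}$ would put the spurious points $(0,1)$ and $(1,0)$ into $A'$.

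Second, pure dimensionality does not follow from the lower density bound. That bound concerns the $A_j$ and does not pass to Hausdorff limits by itself. Derive it from the covering structure instead:
\begin{itemize}
\item Recentre the construction at an arbitrary $y\in A$. Shrink $U''$, and then $U'$, so that the central fibre is $\{y\}$; this is possible because the fibres of $A'$ have at most $d$ points.
\item $A'$ still maps onto $U'$, since the $A_j$ do and the projection is proper.
\item The lower-dimensional components have nowhere dense image. The image of the union of the $k$-dimensional components is closed, so it is all of $U'$.
\item That union therefore meets the central fibre, i.e.\ passes through $y$.
\end{itemize}
Hence $\dim_yA=k$ at every point, which rules out lower-dimensional components.
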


A Hausdorff limit of nonempty connected compact sets is connected. The analytic assertion follows from the local Bishop compactness theorem and a finite coordinate cover. When limits are expressed as analytic cycles, multiplicities may appear; the Hausdorff limit is the support of the limiting cycle.

Reiffen's Proposition~1.12 \cite[p.~275]{Reiffen} gives the corresponding leafwise criterion: local finiteness of the $2k$-dimensional Hausdorff measure of a regular leaf in the ambient manifold is equivalent to its closure being a proper separatrix in his terminology.

\begin{lemma}\label{lem:closures}
If a regular leaf $L$ has finite volume, then it is closed in $M^\circ$, and $A_L$ is a compact irreducible pure $k$-dimensional analytic subset of $M$. Moreover,
\[
 A_L\setminus S=L,\qquad \Vol_{2k}(A_L)=\Vol_{2k}(L).
\]
This statement does not require $\dim_{\CC}S<k$.
\end{lemma}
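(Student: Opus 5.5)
We first show that $L$ is closed in $M^\circ$. Then we apply Theorem~\ref{thm:extension} with $U=M$ and $B=S$, and finally compare $A_L$ with $L$ using the foliated charts.

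\emph{Closedness in $M^\circ$.} Suppose $x\in\overline L\cap M^\circ$, and take a foliated chart $(z,w)$ around $x$ on a small polydisc $P$. Then $L\cap P$ is a union of plaques $\{w=c\}$, countably many since $L$ is second countable. Each plaque through a point near $x$ has $2k$-volume bounded below by a constant $\delta>0$, uniform on a slightly smaller polydisc. Hence finite volume of $L$ allows only finitely many plaques of $L$ to meet the smaller polydisc. A finite union of closed plaques is closed, so $x$ lies on one of these plaques, and thus $x\in L$. This also shows that $L$ is an embedded closed complex submanifold of $M^\circ$, so it is a closed pure $k$-dimensional analytic subset of $M\setminus S$. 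I expect this step to be the main point. The other ingredients (the uniform lower bound on plaque volume, and the fact that plaques in $L\cap P$ other than finitely many cannot accumulate) are routine.

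\emph{Analytic closure.} Since $\Vol_{2k}(L)<\infty$, the set $L$ has locally finite volume up to $S$. Theorem~\ref{thm:extension} then says that the closure of $L$ in $M$, namely $A_L$, is a pure $k$-dimensional analytic subset, and it is compact because $M$ is compact. By the previous step, $A_L\cap M^\circ=L$, that is, $A_L\setminus S=L$.

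\emph{Irreducibility and volume.} The regular locus of $A_L$ contains the connected manifold $L$ as a dense open subset. Indeed, $L$ is open in $A_L$, being $A_L\cap M^\circ$, and it is dense by definition of closure. The components of $A_L$ are closures of connected components of $\operatorname{Reg}A_L$. Each such component meets the dense open set $L$. Because $L$ is connected and lies in $\operatorname{Reg}A_L$, it sits inside a single component, so there is only one component and $A_L$ is irreducible. For the volume, $A_L\cap S$ is an analytic subset of $A_L$ that contains no component of $A_L$, so it has dimension less than $k$ and $\HH^{2k}$-measure zero. Therefore $\Vol_{2k}(A_L)=\Vol_{2k}(L)$.

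None of these steps uses $\dim_{\CC}S<k$. The only facts needed are that $S$ is analytic and that $A_L\cap S$ is a proper analytic subset of the irreducible set $A_L$.
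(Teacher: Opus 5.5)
Your proof is correct and follows essentially the same route as the paper: a uniform lower bound on plaque volumes in a foliated chart leaves only finitely many plaques of $L$ near any point of $M^\circ$, which gives closedness and embeddedness. The rest is Bishop extension across $S$, irreducibility from the density of the connected manifold $L$ in $A_L$, and the volume identity from $\HH^{2k}(A_L\cap S)=0$. Your direct finiteness argument and your regular-locus argument for irreducibility are only slightly more explicit versions of the paper's contradiction argument and its one-line irreducibility claim.
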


\begin{proof}
Suppose $p\in\overline L\cap M^\circ$ and $p\notin L$. Take a foliation chart with coordinates $(z,w)\in\Delta^k\times\Delta^{n-k}$ around $p$, and a smaller box with compact closure in it. A plaque is a set with $w$ constant. The leaf must meet infinitely many distinct plaques whose transverse parameters approach that of $p$. Indeed, if only finitely many plaques were involved, a convergent subsequence of points on one of them would place $p$ on that same plaque and hence on $L$.

The portions of these plaques over a fixed smaller $z$-polydisc are pairwise disjoint and have a common positive lower volume bound. This follows from uniform comparison of the coordinate metric with the Hermitian metric on the smaller box. Their union has infinite volume, a contradiction. Thus $L$ is closed in $M^\circ$. The same argument also shows that locally only finitely many plaques of $L$ meet a smaller box, so $L$ is an embedded analytic submanifold of $M^\circ$.

Apply Theorem~\ref{thm:extension} in coordinate neighborhoods of $S$. This proves analyticity and pure dimension of $A_L$. Compactness follows from compactness of $M$. A connected complex manifold is irreducible as an analytic space; since $L$ is dense in $A_L$, the latter is irreducible. The analytic subset $A_L\cap S$ is proper in $A_L$, and therefore has dimension at most $k-1$ and zero $2k$-dimensional measure. It follows that adjoining it does not change volume.
\end{proof}

\begin{lemma}\label{lem:limit}
Assume (i) and (ii) of Theorem~\ref{thm:main}. Every Hausdorff limit of leaf closures is a connected finite union of leaf closures. In particular, the entire limit is contained in one equivalence class for $\sim$.
\end{lemma}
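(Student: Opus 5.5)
Let $A_{L_j}$ be leaf closures converging in the Hausdorff topology to a set $A$. By hypothesis (i) and Lemma~\ref{lem:closures}, each $A_{L_j}$ is a compact irreducible, hence connected, pure $k$-dimensional analytic set with $\Vol_{2k}(A_{L_j})\le C$. After passing to a subsequence, Theorem~\ref{thm:bishop} shows that $A$ is a nonempty compact connected pure $k$-dimensional analytic subset of $M$. Let $A=A_1\cup\dots\cup A_r$ be its decomposition into irreducible components. There are finitely many components because $A$ is compact. The plan is to show that each $A_i$ is the closure of a single regular leaf. Connectedness of $A$ then links the components by a chain of pairwise intersections, so all of $A$ lies in one $\sim$-class.

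The first step is to show that $A$ is a union of leaves away from $S$. Take $p\in A\setminus S$ and a foliated chart $\Delta^k\times\Delta^{n-k}$ around $p$. Points $p_j\in A_{L_j}$ converge to $p$, and for large $j$ these points lie in $L_j$ because $A_{L_j}\setminus S=L_j$. The plaque of $L_j$ through $p_j$ is a full graph $w=w_j$ over the $z$-polydisc, and the plaques converge to the plaque $w=w(p)$. Hence $A$ contains the plaque through $p$. So $A\setminus S$ is saturated: it is a union of regular leaves. Each such leaf is a connected $k$-dimensional submanifold contained in the $k$-dimensional analytic set $A$. Since the leaf and $A$ have the same dimension, each leaf is an open subset of the regular part of $A$, up to the thin set $A_{\rm sing}$. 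Some care is needed here, but it works out: a leaf passing through a point of $A_{\rm sing}\setminus S$ still has its plaque contained in $A$.

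The second step, which I expect to be the main obstacle, is to show that each component $A_i$ equals $A_L$ for some leaf $L$. By hypothesis (ii), $\dim(A_i\cap S)<k$, so $A_i\setminus S$ is dense in $A_i$ and is nonempty. Because $A_i$ is irreducible, the set $A_i^{\rm reg}\setminus(S\cup\bigcup_{l\ne i}A_l)$ is a connected complex manifold of dimension $k$; removing a proper analytic subset from an irreducible space leaves its regular locus connected. By the first step, this manifold is foliated by open pieces of leaves of dimension $k$, so it lies in a single leaf $L$. Taking closures gives $A_i\subset A_L$. Lemma~\ref{lem:closures} applies to $L$: since $L\subset A$, we get $\Vol(L)\le \Vol(A)<\infty$ locally, and $L$ is closed in $M^\circ$, so in fact $\Vol(L)\le C$ by lower semicontinuity of volume. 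Therefore $A_L$ is irreducible of dimension $k$ and is contained in $A$, which forces $A_L=A_i$.

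Finally, connectedness of $A$ implies that the intersection graph of $A_1,\dots,A_r$ is connected. Any two points of $A$ are therefore joined by a chain of leaf closures with consecutive nonempty intersections, so $A$ lies in one equivalence class. The delicate points are the use of hypothesis (ii), which ensures that no component of $A$ is contained in $S$; without it, a component of $A$ inside $S$ would not be a leaf closure. The other delicate point is checking that the leaf $L$ obtained from the connected regular piece really fills a whole component, which uses equality of dimensions together with irreducibility.
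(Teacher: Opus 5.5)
Your proposal is correct and follows essentially the same route as the paper. Both arguments use Bishop compactness, saturation of $A\setminus S$ by limiting plaques, hypothesis (ii) to keep each irreducible component out of $S$, and connectedness of a component minus a proper analytic subset to place it inside a single leaf, followed by connectedness of the intersection graph. The minor differences are these. You work on $A_i^{\mathrm{reg}}\setminus(S\cup\bigcup_{l\ne i}A_l)$ and conclude $A_L=A_i$ from irreducibility and equal dimension, whereas the paper first shows that $A$ is locally a finite union of plaques off $S$ and then uses continuation. Also, your appeal to lower semicontinuity is unnecessary, since hypothesis (i) already gives $\Vol_{2k}(L)\le C$ for every regular leaf.
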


\begin{proof}
Let $A_{L_j}\to A$. By Lemma~\ref{lem:closures} and Theorem~\ref{thm:bishop}, $A$ is compact, connected, analytic, and pure $k$-dimensional. It has finitely many irreducible components,
\[
 A=A_1\cup\cdots\cup A_r.
\]
Finiteness follows from local finiteness of the irreducible decomposition and compactness.

On $M^\circ$, the set $A$ is saturated by plaques. To see this, take a point $p\in A\cap M^\circ$, approximating points $p_j\in A_{L_j}$ in a foliation chart, and write their transverse coordinates as $w_j\to w$. The corresponding plaques are contained in $A_{L_j}$ and converge to the plaque through $p$. Thus that plaque is contained in $A$. Since $A$ is analytic of dimension $k$, it is locally a finite union of plaques. In particular, it is nonsingular away from $S$, and distinct irreducible components can meet only in $S$.

The inequality $\dim S<k$ ensures that no $A_i$ is contained in $S$. The complement of a proper analytic subset in an irreducible complex analytic space is connected. Hence $A_i\setminus S$ is connected. Locally it is a plaque, so it is contained in one regular leaf. Conversely, saturation and continuation along that leaf show that the whole leaf lies in $A_i\setminus S$: it cannot pass to another component at a regular point. Therefore
\[
 A_i=\overline{L_i}
\]
for a regular leaf $L_i$.

The intersection graph of the finite family $A_1,\ldots,A_r$ is connected. Otherwise $A$ would be the disjoint union of two nonempty compact sets. A path in this graph gives a finite chain of intersecting leaf closures between any two components. All points of $A$ are therefore equivalent.
\end{proof}

\begin{corollary}\label{cor:covered}
Under (i) and (ii), every point of $S$ belongs to a leaf closure meeting $S$. Consequently $S\subset\Sigma$.
\end{corollary}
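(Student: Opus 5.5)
To prove Corollary~\ref{cor:covered}, I will approximate a point of $S$ by regular points, pass to a Hausdorff limit of the corresponding leaf closures, and read off from Lemma~\ref{lem:limit} that the limit is a finite union of leaf closures. The point of $S$ lies in that union, so it lies in one of those leaf closures, and that closure then meets $S$.

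Fix $p\in S$. Since $S$ is a proper analytic subset of the connected manifold $M$, the regular part $M^\circ$ is dense. Choose points $p_j\in M^\circ$ with $p_j\to p$, and let $L_j$ be the regular leaf through $p_j$. By hypothesis (i) each $L_j$ has volume at most $C$. Lemma~\ref{lem:closures} then says that each $A_{L_j}$ is a nonempty compact connected pure $k$-dimensional analytic subset with $\Vol_{2k}(A_{L_j})\le C$.

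Theorem~\ref{thm:bishop} gives a subsequence, still written $A_{L_j}$, converging in the Hausdorff topology to a compact set $A$. Since $p_j\in A_{L_j}$ and $p_j\to p$, the limit point $p$ lies in $A$, by the definition of Hausdorff convergence. Lemma~\ref{lem:limit} now applies under (i) and (ii): it writes $A=\overline{L_1'}\cup\cdots\cup\overline{L_r'}$ as a finite union of regular leaf closures. Hence $p\in A_{L_i'}$ for some $i$.

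This closure $A_{L_i'}$ contains $p\in S$, so $A_{L_i'}\cap S\ne\varnothing$. By definition $A_{L_i'}$ is therefore one of the closures making up $\Sigma$, and $p\in\Sigma$. As $p\in S$ was arbitrary, $S\subset\Sigma$.

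All the work sits in Lemma~\ref{lem:limit}, and that is where hypothesis (ii) is used. It guarantees that no irreducible component of the limit $A$ lies inside $S$, so every component, including one through $p$, is the closure of a regular leaf. Without (ii), a component of the limit could sit inside $S$, and $p$ might not lie on any regular leaf closure. Here, by contrast, the only ingredients are the density of $M^\circ$ and the fact that a limit of points lies in the Hausdorff limit of the sets containing them.
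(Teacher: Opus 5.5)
Your proof is correct and follows the paper's argument: approximate the singular point by regular points, extract a Hausdorff-convergent subsequence of their leaf closures via Theorem~\ref{thm:bishop}, and use Lemma~\ref{lem:limit} to find a leaf closure in the limit containing the point. The extra details you supply (density of $M^\circ$, the volume bound from Lemma~\ref{lem:closures}, and why the limit point lies in the Hausdorff limit) correctly fill in steps the paper leaves implicit.
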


\begin{proof}
Given $s\in S$, choose $p_j\in M^\circ$ tending to $s$ and let $L_j$ be their leaves. Pass to a Hausdorff-convergent subsequence of $A_{L_j}$. Its limit contains $s$. Lemma~\ref{lem:limit} expresses the limit as a finite union of leaf closures, one of which contains $s$.
\end{proof}

\section{Finite measure of the separatrix set}

\begin{proposition}\label{prop:sigma}
Under (i)--(iii), the set $\Sigma$ --if not empty-- is a compact analytic set of pure dimension $k$. It is a finite union of separatrix closures. Each connected component of $\Sigma$ is exactly one equivalence class for $\sim$.
\end{proposition}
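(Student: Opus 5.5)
The plan is to show first that only finitely many distinct leaf closures meet $S$, and then read off analyticity and the structure of components from earlier results.

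\emph{Step 1: finiteness.} Let $\{A_L\}_{L\in I}$ be the distinct leaf closures meeting $S$. By Lemma~\ref{lem:closures}, each is a compact irreducible pure $k$-dimensional analytic set with $A_L\setminus S=L$. Hence two distinct ones meet only inside $S$, so on $M^\circ$ the sets $L=A_L\setminus S$ are pairwise disjoint regular leaves.

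\begin{itemize}
\item I would bound $\Vol_{2k}(L)$ from below uniformly for leaves whose closure meets $S$. Fix $s\in A_L\cap S$. By the Lelong monotonicity inequality, a pure $k$-dimensional analytic set through $s$ has volume at least $c\,r^{2k}$ inside a coordinate ball $B(s,r)$, with $c$ uniform by compactness of $M$ and comparability of metrics. So $\Vol_{2k}(A_L)\ge c\,r_0^{2k}=:\delta>0$ for a fixed $r_0$.
\item Since $\HH^{2k}(S)=0$ by (ii), the leaves are disjoint measurable pieces of $\Sigma$. Summing gives $|I|\,\delta\le\HH^{2k}(\Sigma)<\infty$ by (iii). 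Comparability of $\HH^{2k}$ with $\Vol_{2k}$ on complex submanifolds holds up to a normalizing constant.
\end{itemize}

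Thus $I$ is finite, and $\Sigma$ is a finite union of compact pure $k$-dimensional analytic sets. It is therefore compact, analytic, and of pure dimension $k$.

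\emph{Step 2: components versus classes.} First, each component of $\Sigma$ lies in one class. A component is a union of some of the finitely many $A_L$, and a connected finite union of closed sets has a connected intersection graph. Chains of intersecting closures therefore join any two of its points.

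Conversely, let $x\in\Sigma$ and $x\sim y$ via a chain $A_{L_1},\dots,A_{L_m}$. I would show by induction that each $A_{L_i}\subset\Sigma$ and lies in the same component as $x$. The inductive step is the key claim: if $A_{L'}$ meets $A_L\subset\Sigma$, then $A_{L'}$ meets $S$.

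Suppose $p\in A_L\cap A_{L'}$. If $p\in S$ we are done. If $p\in M^\circ$, then $p\in L\cap L'$ by Lemma~\ref{lem:closures}, so $L=L'$ as leaves are disjoint. Either way $A_{L'}\subset\Sigma$, and since it meets $A_L$ it lies in the same connected component. A starting point $x$ that lies in the class but not in $\Sigma$ is handled by the same argument applied to the chain.

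Finally, every class meeting $S$ is contained in $\Sigma$ by Corollary~\ref{cor:covered}, so it is exactly one component. Every component contains a point of $S$ by definition of $\Sigma$.

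\emph{Main obstacle.} The delicate point is Step 1: making the lower volume bound uniform and legitimately converting $\HH^{2k}(\Sigma)$ into a sum of leaf volumes. Both need care with the measure-zero overlaps in $S$, using $\dim S<k$. I would first try the Lelong-number bound, which gives multiplicity at least $1$, together with a finite cover of $M$ by charts where the Kähler and Euclidean metrics are uniformly comparable.
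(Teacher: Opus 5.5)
Your proof is correct, but it gets finiteness by a different mechanism than the paper.

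\textbf{The paper's route.} It works locally in $M^\circ$. Since $\Sigma$ is saturated and each plaque in a shrunken foliation box carries a fixed amount of volume, (iii) allows only finitely many plaques of $\Sigma$ per box. Hence $\Sigma\setminus S$ is closed and analytic in $M^\circ$. Bishop's extension theorem then makes $\overline{\Sigma\setminus S}$ analytic, and Corollary~\ref{cor:covered} identifies this closure with $\Sigma$. Finiteness of the family of separatrix closures is read off last, from the finite irreducible decomposition of a compact analytic set, each component being a leaf closure as in Lemma~\ref{lem:limit}.

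\textbf{Your route.} You reverse the order. A chart-uniform Lelong-monotonicity bound at a point of $A_L\cap S$ gives $\Vol_{2k}(A_L)\ge\delta$. Combined with pairwise disjointness of the leaves $L=A_L\setminus S$, this gives $|I|\le\HH^{2k}(\Sigma)/\delta$ directly. Analyticity is then immediate, since a finite union of the compact irreducible sets from Lemma~\ref{lem:closures} is analytic.

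\textbf{What each buys.} Your approach gives an explicit count depending only on $(M,\omega)$ and $\HH^{2k}(\Sigma)$. It avoids a second Bishop extension and the identification $\overline{\Sigma\setminus S}=\Sigma$. It also never really uses (ii) for the proposition itself.
\begin{itemize}
\item Your appeal to $\HH^{2k}(S)=0$ is unnecessary: the leaves $L$ already avoid $S$, and $\Vol_{2k}(L)=\Vol_{2k}(A_L)$ is part of Lemma~\ref{lem:closures}, which does not need (ii).
\item Corollary~\ref{cor:covered} enters only in your closing remark about classes meeting $S$.
\end{itemize}
The price is importing Lelong's theorem with a constant uniform over a finite atlas. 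The paper's argument needs only the elementary plaque-volume bound already used in Lemma~\ref{lem:closures}. The paper does use your monotonicity count, in the local setting, right after Proposition~\ref{prop:MM}.

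\textbf{Step 2.} This essentially matches the paper's chain argument. The sentence about a starting point outside $\Sigma$ is superfluous: once $x\in\Sigma$ and $x\sim y$, your induction already places $y$ in the component of $x$.
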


\begin{proof}
On $M^\circ$, the set $\Sigma$ is saturated. In a foliation chart, every plaque it meets is wholly contained in it. In a smaller box, each such plaque contributes a fixed positive minimum amount of $2k$-volume. The finiteness of $\HH^{2k}(\Sigma)$ therefore implies that only finitely many plaques occur there. It follows that $\Sigma\setminus S$ is closed and analytic in $M^\circ$, of pure dimension $k$.

Its volume is locally finite up to $S$, so Bishop extension makes
\[
 Z=\overline{\Sigma\setminus S}
\]
an analytic subset of $M$. There are no additional points in $M^\circ$, because $\Sigma\setminus S$ is already closed there. Any additional points lie in $S$, which is contained in $\Sigma$ by Corollary~\ref{cor:covered}. Conversely, every point of $\Sigma$ lies in the closure of one of its regular leaves. Thus $Z=\Sigma$.

Write its finite irreducible decomposition as $\Sigma=A_1\cup\cdots\cup A_N$. As in Lemma~\ref{lem:limit}, each $A_i\setminus S$ is one regular leaf and $A_i$ is its closure. Each $A_i$ meets $S$: a regular point on it belongs, by the definition of $\Sigma$, to a leaf whose closure meets $S$.

The connected components of $\Sigma$ are the unions indexed by the connected components of the intersection graph of the $A_i$. Hence points in any one connected component are joined by a finite chain of leaf closures. No chain can leave $\Sigma$. Indeed, a leaf closure outside this family is disjoint from $S$ and is a compact regular leaf by Lemma~\ref{lem:closures}. It cannot intersect $\Sigma$ at a regular point, because distinct regular leaves are disjoint, and it cannot intersect it at a singular point because it avoids $S$. This also proves that every equivalence class outside $\Sigma$ is a single compact regular leaf.
\end{proof}

\section{Closedness of the equivalence relation}

\begin{proof}[Proof of Theorem~\ref{thm:main}]
Proposition~\ref{prop:sigma} gives the assertions concerning $\Sigma$ and the equivalence classes. It remains to prove closedness of
\[
 R=\{(x,y)\in M\times M:x\sim y\}.
\]
Suppose $(x_j,y_j)\in R$ and $(x_j,y_j)\to(x,y)$. There are two cases after passage to a subsequence.

First, suppose the equivalence class of $(x_j,y_j)$ meets $S$ for infinitely many $j$. There are only finitely many such classes, namely the connected components of $\Sigma$. On a further subsequence one fixed component contains both $x_j$ and $y_j$. It is closed, so it contains both limits, and $x\sim y$.

Otherwise, $x_j$ and $y_j$ lie on the same compact regular leaf $L_j$ disjoint from $\Sigma$. Bishop compactness gives a subsequence $L_j\to A$. Both $x$ and $y$ belong to $A$. By Lemma~\ref{lem:limit}, all points of $A$ belong to one equivalence class. Again $x\sim y$. Since $M\times M$ is metrizable, this proves that $R$ is closed.

To prove that $Q$ is Hausdorff, first observe that $q$ is closed. If $F\subset M$ is closed, its saturation is the projection of
\[
 R\cap(M\times F),
\]
which is compact and hence closed. Thus the quotient map $q:M\to Q$ is closed. Distinct equivalence classes $K_1,K_2$ are disjoint compact subsets of $M$. Choose disjoint open neighborhoods $O_1,O_2$ of them. The sets
\[
 V_i=Q\setminus q(M\setminus O_i),\qquad i=1,2,
\]
are open neighborhoods of the corresponding quotient points, and $q^{-1}(V_i)\subset O_i$. They are disjoint. Finally $Q$ is compact as a continuous image of $M$.
\end{proof}

\begin{proposition}[A weaker geometric assumption]\label{prop:weaker}
Assume (i) and (ii), and replace (iii) by the requirement that there are only finitely many equivalence classes meeting $S$, each of which is closed in $M$. Then $M/{\sim}$ is compact Hausdorff.
\end{proposition}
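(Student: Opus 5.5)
The plan is to rerun the closedness argument from the proof of Theorem~\ref{thm:main}, but with the finiteness and closedness of the singular classes now supplied directly by hypothesis. Let $E_1,\dots,E_m$ be the equivalence classes meeting $S$; by assumption each is closed, hence compact. First I will describe the classes disjoint from $S$. If a class $K$ avoids $S$, then every leaf closure in a chain from $K$ avoids $S$. By Lemma~\ref{lem:closures} and (i), each such closure is a compact regular leaf with $A_L=L$. Two distinct such leaves are disjoint, so $K$ is a single compact regular leaf, and in particular compact. Corollary~\ref{cor:covered} (which uses only (i),(ii)) shows $S\subset E_1\cup\dots\cup E_m$. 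Every class is therefore either one of the $E_i$ or a compact leaf contained in $M^\circ$.

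Next I will show that $R=\{(x,y):x\sim y\}$ is closed, by taking $(x_j,y_j)\in R$ converging to $(x,y)$ and passing to a subsequence. In the first case, infinitely many pairs lie in classes meeting $S$. There are finitely many such classes, so a further subsequence lies in a single $E_i$. Since $E_i$ is closed, $x,y\in E_i$. In the second case, $x_j,y_j$ lie on a compact leaf $L_j=A_{L_j}$ avoiding $S$. Theorem~\ref{thm:bishop}, together with the volume bound (i), gives a Hausdorff limit $A$ containing $x$ and $y$. Lemma~\ref{lem:limit} then says that $A$ lies in a single equivalence class, so $x\sim y$. The point to check is that Lemma~\ref{lem:limit} used only (i) and (ii), not (iii); this holds as stated.

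Finally, I will copy the Hausdorff argument verbatim. Since $R$ is closed in the compact space $M\times M$, the saturation of a closed set $F$ is the projection of the compact set $R\cap(M\times F)$, so $q$ is closed. Distinct classes are disjoint compact sets, so they have disjoint open neighbourhoods $O_1,O_2$. Then the sets $Q\setminus q(M\setminus O_i)$ are disjoint open neighbourhoods of the two quotient points, and $Q$ is compact as a continuous image of $M$.

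The main obstacle is the first case of the closedness argument. Without (iii), the $E_i$ need not be analytic. The finiteness hypothesis is exactly what lets a subsequence stay in one class, and the closedness hypothesis is exactly what lets the limit stay in it. So the only real work is confirming that nothing else in the earlier proof used (iii).
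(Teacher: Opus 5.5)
Your proposal is correct and follows essentially the same route as the paper: use Corollary~\ref{cor:covered}, note that a class avoiding $S$ is a single compact regular leaf, and then rerun the two-case sequential argument and the closed-map Hausdorff argument from the proof of Theorem~\ref{thm:main}, after checking that Lemma~\ref{lem:limit} needs only (i) and (ii). As a minor remark, the inclusion $S\subset E_1\cup\dots\cup E_m$ does not actually require Corollary~\ref{cor:covered}, since each $s\in S$ lies in its own class, which therefore meets $S$.
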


\begin{proof}
Corollary~\ref{cor:covered} still applies. A class avoiding $S$ is a single compact regular leaf, since any intersection of distinct leaf closures lies in $S$. The two-case sequential proof above therefore applies without change.
\end{proof}

Finite measure of $\Sigma$ implies the finiteness and closedness required in Proposition~\ref{prop:weaker}. It is not necessary for Hausdorffness: the pencil of lines through a point in $\PP^2$ has bounded leaf areas and quotient a single point, although $\Sigma=\PP^2$ has infinite $\HH^2$-measure. Whether (i) and (ii) alone imply Hausdorffness is not settled here.

\begin{remark}
The proof of Theorem~\ref{thm:main}, with the volume bound assumed, uses only a smooth Hermitian metric on a compact complex manifold. Closedness of the K\"ahler form is used in the homological volume computations in the examples below.
\end{remark}

\section{The global complex analytic quotient}\label{sec:analytic}

All complex analytic spaces are understood to be Hausdorff. For regular holomorphic leaf quotients see Holmann \cite{Holmann}; for analytic quotients and invariant holomorphic functions see Cartan \cite{Cartan}.

\begin{theorem}[Global analytic quotient]\label{thm:analytic}
Under hypotheses \textup{(i)--(iii)} of Theorem~\ref{thm:main}, the space $Q=M/{\sim}$ has a canonical structure of a normal compact complex analytic space of dimension $n-k$. The projection
\[
 q:M\longrightarrow Q
\]
is a proper surjective holomorphic map with connected, pure $k$-dimensional fibers, and
\[
 \mathcal O_Q=q_*\mathcal O_M.
\]
Its fibers are exactly the prescribed finite-chain classes of leaf closures. In addition, $R=M\times_Q M$, taken with its reduced structure, is a pure $(n+k)$-dimensional analytic subset of $M\times M$.
\end{theorem}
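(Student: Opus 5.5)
The plan is to realize the equivalence classes as a family of compact $k$-cycles in Barlet's cycle space $\mathcal C_k(M)$, take the closure of that family as an analytic space, and then use Stein factorization to get a normal quotient whose fibers are exactly the classes.

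\textbf{Step 1: the cycle map.} By Theorem~\ref{thm:main}, every class is either a compact regular leaf $L$ disjoint from $\Sigma$, or one of the finitely many connected components $\Sigma_1,\dots,\Sigma_m$ of $\Sigma$. Let $M_0=M\setminus\Sigma$. Since $R$ is closed and $q$ is a closed map, $M_0$ is open and saturated. On $M_0$ the foliation is regular with compact leaves of bounded volume. The finite holonomy argument (Epstein, Holmann) then gives the following:
\begin{itemize}
\item $q(M_0)$ is a complex orbifold of dimension $n-k$;
\item the map $x\mapsto[L_x]$ into $\mathcal C_k(M)$ is holomorphic on $M_0$, where the cycle is taken with multiplicity one;
\item the map is constant exactly on leaves.
\end{itemize}
This gives a meromorphic map $\phi:M\dashrightarrow\mathcal C_k(M)$.

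\textbf{Step 2: properness via volume.} By (i), all cycles $[L_x]$ satisfy the homological volume bound $\int_L\omega^k\le k!\,C$. Bishop compactness, in Barlet's version of Lieberman's theorem for K\"ahler $M$, says that cycles of bounded volume form a compact subset of $\mathcal C_k(M)$. So the closure $\Gamma$ of the graph of $\phi|_{M_0}$ in $M\times\mathcal C_k(M)$ is contained in a compact set. I would then show that $\Gamma$ is analytic of dimension $n$, extending across $\Sigma$ by Remmert--Stein. This works because $\Sigma$ has dimension $k<n$, and the relevant image over $\Sigma$ lies in the compact analytic space of bounded-volume cycles. Let $Y$ be the image of $\Gamma$ in $\mathcal C_k(M)$, a compact analytic space of dimension $n-k$.

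By Lemma~\ref{lem:limit}, a limit cycle at a point of $\Sigma$ has support a connected union of leaf closures. That support lies inside a single component $\Sigma_i$, since classes are closed and $\Sigma_i$ is open in $\Sigma$. Thus $\Gamma\to M$ is bimeromorphic, and the fiber of $\Gamma\to Y$ over $y$ projects into one class.

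\textbf{Step 3: Stein factorization.} Take the Stein factorization of the proper holomorphic map $\Gamma\to Y$, and normalize to get $\Gamma\to Q'\to Y$ with connected fibers and $Q'$ normal. The main obstacle is to show that the induced relation on $M$ agrees with $\sim$. Over $M_0$, connected fibers are single leaves. Over $\Sigma$ there are two problems to rule out: the preimage of $\Sigma_i$ could map to a positive-dimensional set in $Q'$, and $\Sigma_i$ could split into several fibers.

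My first attempt is as follows. Since $\Sigma$ is a closed saturated set and each $\Sigma_i$ is a single class, the continuous map $M\to Q$ factors through $Q'$ via connectedness. Conversely, the rigidity lemma says a proper holomorphic map contracts any fiber whose image under every invariant function is constant. Together these should show that each $\Sigma_i$ goes to one point of $Q'$.

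To make this precise, I would use $q_*\mathcal O_M$. I would check that local $\sim$-invariant holomorphic functions near $\Sigma_i$ are pulled back from $Q'$. This uses Riemann extension on the normal space $Q'$ across the image of $\Sigma$, which has codimension at least one there. The image is a finite set if contraction holds. Then the homeomorphism $Q'\cong Q$ follows because both are compact Hausdorff and the induced map is a continuous bijection.

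\textbf{Step 4: remaining assertions.} Set $\mathcal O_Q=q_*\mathcal O_M$. This is the structure sheaf of $Q'$ by Stein factorization and normality. The fibers are connected and pure $k$-dimensional, by the class description in Theorem~\ref{thm:main}. Finally, $R=M\times_Q M$ is the fiber product of a holomorphic map, so it is analytic. Over the dense open set $M_0\times_Q M_0$ it is a manifold of dimension $n+k$. Its part over $\Sigma$ is $\bigcup_i\Sigma_i\times\Sigma_i$, of dimension $2k<n+k$, and this part lies in the closure of the generic part, again by Lemma~\ref{lem:limit}. So after taking the reduced structure, $R$ has pure dimension $n+k$.
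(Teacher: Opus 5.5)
There is a genuine gap at the point you yourself call the main obstacle. Step~3 does not prove that each $\Sigma_i$ goes to a single point, nor that no two classes are identified. The ``rigidity lemma'' you quote is not a standard statement in that form. The Riemann-extension argument assumes the image of $\Sigma$ is finite (``if contraction holds''), which is exactly what must be shown.

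The missing idea is a finiteness statement you almost reach at the end of Step~2:
\begin{itemize}
\item A limit cycle $Z$ at $x\in\Sigma_i$ has $|Z|\subset\Sigma_i$. So $Z=\sum_j m_jA_j$ over the finitely many irreducible components $A_j$ of $\Sigma_i$, and the volume bound leaves only a finite set $F_i$ of such cycles.
\item For small connected neighborhoods $W$ of $x$, the set $W\setminus\Sigma$ is connected. Hence the cluster set of the cycle map at $x$ is a decreasing intersection of compact connected sets contained in $F_i$, and so it is a single point.
\item The cycle map therefore extends continuously, hence holomorphically, to a map $\Phi$ on all of $M$. It is constant on the connected set $\Sigma_i$. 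Since $\varnothing\ne|\Phi(x)|\subset R_x$, distinct classes receive distinct cycles.
\end{itemize}
Only then does Stein factorization of $\Phi$, from the normal space $M$, give the quotient. Factorizing $\Gamma\to Y$ and then normalizing is problematic, because a map from a possibly non-normal $\Gamma$ need not lift to the normalization.

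The same finiteness is needed for your Remmert--Stein step. As written, the excised set is $\Sigma\times\mathcal C$. Even $\Sigma\times Y$ has dimension $k+(n-k)=n$, which is not less than $\dim\Gamma$, so the theorem does not apply. One must excise $\Sigma\times F$ with $F$ finite.

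Two further claims are wrong as stated.
\begin{itemize}
\item In Step~1, the multiplicity-one map $x\mapsto[L_x]$ is not even continuous on $M_0$ at leaves with nontrivial holonomy: nearby generic leaves converge to $|H_L|\,[L]$. In Example~\ref{ex:holonomy}, the leaves $E/\langle\tau\rangle$ of area $\Vol(E)/2$ are limits of leaves of area $\Vol(E)$. You must use multiplicity $|H_L|$, which is what Barlet's construction gives for the family $R_0$, or close up the graph from the dense trivial-holonomy locus.
\item In Step~4, the inclusion $\Sigma_i\times\Sigma_i\subset\overline{R_0}$ does not follow from Lemma~\ref{lem:limit}, which only gives $\overline{R_0}\subset R$. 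It follows from $y\in|\Phi(y)|=|\Phi(x)|$, together with the fact that $|\Phi(x)|$ is a Hausdorff limit of leaves $L_{x_j}$ with $x_j\to x$. Also, $R_0$ is not a manifold along leaves with holonomy, although it is pure $(n+k)$-dimensional.
\end{itemize}

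The paper avoids the indeterminacy by a different order of steps. It first extends $R_0$ inside $M\times M$ by Remmert--Stein, where $\dim(\Sigma\times\Sigma)=2k<n+k$ holds automatically. It then applies Barlet's fiber-cycle theorem over the normal base $M$, which produces a holomorphic $\Phi$ with the correct multiplicities directly. Constancy on classes comes from the density argument together with $A_L\times A_L\subset G$.
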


A compact effective $k$-cycle is a finite sum $\sum_i m_i Z_i$, where the $Z_i$ are irreducible compact $k$-dimensional analytic sets and $m_i$ are positive integers. Its support is $\bigcup_i Z_i$. Barlet's space $\mathcal C_k(X)$ parametrizes such compact cycles and has a complex analytic structure \cite{Barlet}.

\begin{theorem}[Analytic family of fiber cycles]\label{thm:cycles}
Let $T$ be a connected normal complex space and $X$ a reduced complex space, and let $G\subset T\times X$ be a reduced analytic subset proper over $T$. Suppose that every irreducible component of $G$ maps surjectively to $T$ with pure $k$-dimensional fibers. There is a holomorphic map
\[
 \Phi:T\longrightarrow\mathcal C_k(X)
\]
whose value at $t$ has support $G_t=\{x:(t,x)\in G\}$. The multiplicities are uniquely specified by requiring the generic fibers to have multiplicity one.
\end{theorem}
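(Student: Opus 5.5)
This statement is essentially Barlet's theorem on analytic families of cycles defined by a proper flat-like family of fibers over a normal base, so the plan is to reduce to that result rather than reprove it. I would first reduce to the case where $G$ is irreducible. Since $G$ is proper over $T$ and every component surjects onto $T$ with pure $k$-dimensional fibers, the union $G=\bigcup G^{(\alpha)}$ is locally finite. If each component gives a holomorphic map $\Phi_\alpha$, then the sum $\Phi=\sum_\alpha\Phi_\alpha$ is holomorphic into $\mathcal C_k(X)$. Addition of cycles is holomorphic on Barlet's space, and over a relatively compact part of $T$ only finitely many components are involved. The support of the sum is $\bigcup_\alpha G^{(\alpha)}_t=G_t$. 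So it suffices to treat a single irreducible $G$ of dimension $\dim T+k$, with the projection $\pi:G\to T$ proper, surjective and equidimensional of fiber dimension $k$.

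For irreducible $G$, I will work locally on $T$ and on $X$. Near a point $(t_0,x_0)\in G$ I choose an adapted scale: a local embedding of $X$ near $x_0$ into $\CC^N$ and a linear projection $p:\CC^N\to\CC^k$ such that $G_{t_0}$ is finite over a polydisc $D\subset\CC^k$ and avoids the boundary of the box $D\times B$. By properness and upper semicontinuity of fibers, the same holds for $t$ near $t_0$. Then $(\pi,p):G\cap(U\times D\times B)\to U\times D$ is proper with finite fibers, hence finite. Because $G$ is irreducible and $T$ is normal, this map is a branched covering of some degree $d$ over the normal space $U\times D$. I will then define the local multiplicities as follows: over the dense Zariski-open set of $t$ where $G_t$ is reduced (generic multiplicity one), the cycle $\Phi(t)$ is the reduced fiber. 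For general $t$, the multiplicities are counted by the sheet numbers of this covering. The elementary symmetric functions of the sheets, evaluated on coordinate functions, give holomorphic functions on $U\times D$; this uses the normality of $T$ in an essential way, via Riemann extension for bounded functions that are holomorphic off the branch locus. In Barlet's local description this is exactly what it means for $t\mapsto\Phi(t)$ to be an analytic family of multigraphs in the scale, so $\Phi$ is holomorphic near $t_0$.

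The remaining points are easy by comparison. First, the local definitions agree on overlaps of scales, since the degree of a cycle in a scale is determined by intersection numbers. Second, the support of $\Phi(t)$ is $G_t$, because every point of $G_t$ lies on some sheet. Third, uniqueness holds: any holomorphic $\Phi'$ with the same supports and multiplicity one generically agrees with $\Phi$ on a dense open set, and $\mathcal C_k(X)$ is Hausdorff, so $\Phi'=\Phi$.

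I expect the main obstacle to be the holomorphy (rather than mere continuity) of $\Phi$ at nonreduced or non-flat fibers. This is precisely where normality of $T$ is needed, and where I would simply invoke Barlet's theorem \cite{Barlet} that a geometrically flat family over a normal base defines a holomorphic map to $\mathcal C_k(X)$. Equidimensionality of $\pi$ over a normal base gives geometric flatness.
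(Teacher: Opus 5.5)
Your proposal is correct and follows essentially the paper's route. Holomorphy comes from Barlet's theorem for proper equidimensional families over a normal base (the paper cites it in Mathieu's formulation, his Proposition~1). Uniqueness comes from agreement with the reduced fibers on a dense open set together with Hausdorffness of $\mathcal C_k(X)$.

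Your reduction to irreducible components and your branched-covering and Riemann-extension sketch are extra detail the paper omits. In that reduction, note that over a dense open set of $T$ the fibers of distinct components share no irreducible component. This holds because $\dim(G^{(\alpha)}\cap G^{(\beta)})<\dim T+k$, and it is what keeps the summed cycle of multiplicity one generically.
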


This is the proper version of Barlet's fiber-cycle construction; see \cite[Proposition~1, p.~1161]{Mathieu}, which states the normal-base, constant-corank criterion. Here constant corank means that the local dimension of every fiber is $k$. Since the graph is proper over the base, the analytic family defines a holomorphic map to the space of compact cycles. The cycles are reduced on a dense open subset of the base; at other parameters their multiplicities may be greater than one. Two cycles with the same support need not be equal. Uniqueness follows from the generic normalization and continuity into the Hausdorff cycle space. For Remmert's proper mapping theorem and Stein factorization, see \cite{Fischer}.

\begin{proof}[Proof of Theorem~\ref{thm:analytic}]
Put $U=M\setminus\Sigma$. By Theorem~\ref{thm:main}, $\Sigma$ is either empty or a proper compact pure $k$-dimensional analytic set, $S\subset\Sigma$, and $U$ is saturated by compact regular leaves. Every equivalence class is a connected compact pure $k$-dimensional analytic set. $R$ is closed in $M\times M$.

\smallskip\noindent\emph{1. The regular relation.}
The uniform volume bound implies finite holonomy for every leaf in $U$ by the compact-leaf criterion \cite[Theorems~4.2--4.3]{Epstein}. Reeb stability and the holomorphic transverse structure give local leaf quotients $T/H$, where $T$ is a transverse complex disc of dimension $n-k$ and $H$ is the finite holonomy group. Consequently
\[
 R_0=R\cap(U\times U)
\]
is analytic of pure dimension $n+k$. Near a pair of points on one leaf its branches are given in foliation coordinates by equations $w'=h(w)$ for finitely many holomorphic transverse transformations $h$; the two longitudinal coordinates are free. These descriptions also show that both projections $R_0\to U$ are open.

\smallskip\noindent\emph{2. Extending the relation across the separatrix set.}
Set $G=\overline{R_0}^{\,M\times M}$. Closedness of $R$ gives $G\subset R$. No equivalence class has one point in $\Sigma$ and one point in $U$. Since $R$ is closed, each mixed pair has a neighborhood disjoint from $R$. Thus
\[
 G\setminus R_0\subset\Sigma\times\Sigma.
\]
It follows that $R_0$ is a closed analytic subset of $(M\times M)\setminus(\Sigma\times\Sigma)$: near mixed pairs it is empty. Since
\[
 \dim(\Sigma\times\Sigma)=2k<n+k=\dim R_0,
\]
the Remmert--Stein extension theorem makes $G$ a compact pure $(n+k)$-dimensional analytic subset of $M\times M$. If $\Sigma$ is empty, take $G=R_0$. The diagonal of $M$ lies in $G$, because the diagonal of the dense open set $U$ lies in $R_0$.

Let $G_i$ be an irreducible component of $G$, endowed with its reduced structure, and let $p:G_i\to M$ be either projection, these projections are both proper. For $a\in M$, the fiber $p^{-1}(a)$ is contained in the class $R_a$, so every irreducible component of this fiber has dimension at most $k$. For the second projection this uses the symmetry of $R$. Compare \cite{Mathieu}.

For the reverse inequality, take a point $z\in p^{-1}(a)$ and local coordinates $t_1,\ldots,t_n$ on $M$ centered at $a$. The fiber is cut out in the pure $(n+k)$-dimensional space $G_i$ by the $n$ holomorphic equations
\[
 t_1\circ p=\cdots=t_n\circ p=0.
\]
Every local irreducible component of their common zero set therefore has dimension at least $(n+k)-n=k$. Hence every nonempty fiber of $p$ is pure $k$-dimensional. Remmert's theorem makes $p(G_i)$ an analytic subset of $M$, and the dimension theorem gives
\[
 \dim p(G_i)=\dim G_i-k=n.
\]
A connected complex manifold has no proper analytic subset of its own dimension, so $p(G_i)=M$.

Thus each component of $G$ is proper and surjective over $M$, with constant corank $k$. The base is the smooth manifold $M$, which is normal. Theorem~\ref{thm:cycles} therefore applies with base $M$.

\smallskip\noindent\emph{3. The cycle map and its fibers.}
Apply theorem \ref{thm:cycles} to the first projection of $G$. It gives a holomorphic map
\[
 \Phi:M\longrightarrow\mathcal C_k(M),\qquad
 |\Phi(x)|=G_x\subset R_x,
\]
where $R_x$ is the equivalence class of $x$ and $|\cdot|$ denotes cycle support. Choose a dense open set $V\subset M$ on which the cycles $\Phi(x)$ are reduced. Let $x\in U$. By Theorem~\ref{thm:main}, the class $R_x$ is the compact
leaf $L_x$, and the inclusions $R_0\subset G\subset R$ give
$G_x=L_x$. If moreover $x\in V$, the cycle $\Phi(x)$ is reduced, so
$\Phi(x)=[L_x]$. In particular, for $(x,y)\in R_0$ with $x,y\in V$,
\[
 \Phi(x)=[L_x]=[L_y]=\Phi(y).
\]
Put $V_U=V\cap U$. Each projection $p_j:R_0\to U$ is open and surjective. If $W\subset R_0$ is nonempty and open, then $p_j(W)$ is nonempty and open in $U$, and hence meets $V_U$. Thus $p_j^{-1}(V_U)$ is dense and open in $R_0$. Their intersection
\[
 D=p_1^{-1}(V_U)\cap p_2^{-1}(V_U)
\]
is dense in $R_0$. On $D$ the two cycles are equal, including their multiplicities, since both are the reduced cycle of the same leaf. The cycle space is Hausdorff, so the equality locus of the two continuous maps $\Phi\circ p_1$ and $\Phi\circ p_2$ is closed. Equality therefore holds throughout $R_0$ and then throughout its closure $G$. We obtain
\begin{equation}\label{eq:equal-on-G}
 (x,y)\in G\quad\Longrightarrow\quad\Phi(x)=\Phi(y).
\end{equation}

Every leaf closure $A_L$ satisfies $A_L\times A_L\subset G$. To verify this, take $x,y\in L$ and a path in $L$ joining them. A finite chain of foliation chartes along the path supplies a holomorphic transport map between small transversals through $x$ and $y$. Choose points $x_j\in U$ on the first transversal tending to $x$, and transport them to points $y_j\to y$. Indeed, in a foliation chart $\Sigma\cap M^\circ$ is a finite union of plaques. Its intersection with a sufficiently small transversal is therefore finite, and its complement is dense because $n-k>0$. Saturation of $U$ ensures that the transported points also belong to $U$. The points $x_j,y_j$ lie on the same leaf in $U$, so $(x_j,y_j)\in R_0$. Hence $(x,y)\in G$. Closing in each factor proves the assertion for $A_L$.

By \eqref{eq:equal-on-G}, $\Phi$ is constant on each $A_L$. It is therefore constant on every finite chain of intersecting leaf closures, hence on every $R$-class. Conversely, if $\Phi(x)=\Phi(y)$, the diagonal inclusion gives
\[
 x\in G_x=|\Phi(x)|=|\Phi(y)|=G_y\subset R_y,
\]
so $x\sim y$. We have proved
\begin{equation}\label{eq:fibers}
 \Phi(x)=\Phi(y)\quad\Longleftrightarrow\quad x\sim y.
\end{equation}
This also shows $G=R$. If $x\sim y$, then $\Phi(x)=\Phi(y)$ and hence $G_x=G_y$. The diagonal inclusion gives $y\in G_y$, so $y\in G_x$ and $(x,y)\in G$. Together with $G\subset R$, this proves equality.

\smallskip\noindent\emph{4. The normal analytic structure.}
The map $\Phi$ is proper because $M$ is compact and the cycle space is Hausdorff. Its image $Z=\Phi(M)$, with its reduced structure, is a compact complex analytic space by Remmert's theorem. Take its Stein factorization
\[
 M\xrightarrow{q_{\mathrm{an}}}Q_{\mathrm{an}}
   \xrightarrow{\nu}Z.
\]
Since $M$ is normal, $Q_{\mathrm{an}}$ is normal, $q_{\mathrm{an}}$ has connected fibers, and $(q_{\mathrm{an}})_*\mathcal O_M=\mathcal O_{Q_{\mathrm{an}}}$, see \cite{Serre}. Every fiber of $\Phi$ is connected by \eqref{eq:fibers} and Theorem~\ref{thm:main}. Consequently the finite map $\nu$ is bijective and the fibers of $q_{\mathrm{an}}$ are exactly the $R$-classes. The induced continuous bijection $M/{\sim}\to Q_{\mathrm{an}}$ is a homeomorphism, by compactness and Hausdorffness. Transfer the normal analytic structure through it. The identity $\mathcal O_Q=q_*\mathcal O_M$ characterizes the resulting structure intrinsically. Finally the fibers have pure dimension $k$, so $\dim Q=n-k$.
\end{proof}

\begin{remark}[Cartan and invariant holomorphic functions]
Cartan's work on analytic quotients \cite{Cartan} relates the quotient structure to holomorphic functions constant on equivalence classes. In the present construction this description follows from Stein factorization. For every open set $W\subset Q$,
\[
 \mathcal O_Q(W)\xrightarrow{\ q^*\ }
 \mathcal O_M(q^{-1}(W))
\]
is an isomorphism. Indeed, the reduced analytic sets underlying the fibers are compact and connected, so every holomorphic function on $q^{-1}(W)$ is constant on each fiber, and the direct-image identity in Step~4 gives its holomorphic descent. Local coordinates in an analytic embedding of $W$ pull back to invariant holomorphic functions separating the fibers over $W$. For a finite holonomy group $H$, the transverse chart $T/H$ has the invariant structure sheaf $(\pi_*\mathcal O_T)^H$, where $\pi:T\to T/H$ is the orbit map.
\end{remark}

\begin{remark}[Relation with Reiffen's leaf spaces]\label{rem:reiffen}
In \cite[Section~3, p.~280]{Reiffen}, a family of regular leaves is called connected when any two members can be joined by a finite sequence of leaves with intersecting closures. This is the same finite-chain condition used in the definition of $\sim$. Reiffen's conditions~3.1 concern the covering of the singular set by leaf closures, closedness of regular leaves outside the singular set, and analytic extension of those leaves. Lemma~\ref{lem:closures} and Corollary~\ref{cor:covered} establish these properties under hypotheses~\textup{(i)} and~\textup{(ii)} of Theorem~\ref{thm:main}.

Reiffen also considers the family
\[
 \mathcal L_{\mathrm{int}}
 =\{L:\text{there is a regular leaf }L'\ne L
           \text{ with }A_L\cap A_{L'}\ne\varnothing\},
\]
denoted by $L^*$ in \cite[Section~3]{Reiffen}. Under hypothesis~\textup{(iii)}, this family is finite: two distinct leaf closures can intersect only in $S$, and Proposition~\ref{prop:sigma} gives only finitely many leaf closures meeting $S$.

For a codimension-one foliation, Reiffen's Proposition~4.7 \cite[pp.~284]{Reiffen} states that closedness of the regular leaves outside the singular set, finiteness of $\mathcal L_{\mathrm{int}}$, and Hausdorffness of the space obtained by identifying maximal connected families yield a holomorphic first-integral map to a Riemann surface. Theorem~\ref{thm:main} establishes Hausdorffness of the quotient on $M$ from the volume and dimension hypotheses, and Theorem~\ref{thm:analytic} constructs its normal analytic structure in arbitrary codimension. Reiffen's Definition~2.13 also equips the leaf space with the sheaf of functions whose pullbacks are holomorphic, as in the identity $\mathcal O_Q=q_*\mathcal O_M$ above.
\end{remark}

\section{Separatrices, holonomy, and singularities of the quotient}\label{sec:singularities}

Write $r=n-k$ and $B=q(\Sigma)$. The set $B$ is finite, because $\Sigma$ has finitely many connected components. The inverse image $q^{-1}(B)$ equals $\Sigma$. The points of $B$ correspond to classes meeting $S$.

\begin{proposition}[Local holonomy quotient]\label{prop:holonomy}
For a compact regular leaf $L\subset M\setminus\Sigma$, let $H_L$ denote its effective finite holonomy group acting on a transverse germ $(\CC^r,0)$. Then
\[
 (Q,q(L))\simeq(\CC^r/H_L,0).
\]
The action of $H_L$ is holomorphically conjugate to its derivative representation
$\rho:H_L\to\operatorname{GL}(r,\CC)$, where $\rho(h)=D_0h$.
\end{proposition}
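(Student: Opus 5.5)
Three steps, in order: linearize the holonomy, build a saturated neighborhood of $L$ via Reeb stability, and compare that neighborhood with $Q$ through the identity $\mathcal O_Q=q_*\mathcal O_M$ of Theorem~\ref{thm:analytic}.

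\emph{Linearization.} Since $L\subset U=M\setminus\Sigma$ is a compact leaf with finite holonomy (Step~1 of the proof of Theorem~\ref{thm:analytic}), $H_L$ acts effectively by germs of biholomorphisms of $(\CC^r,0)$ fixing $0$. This is Cartan's linearization. Average the identity chart,
\[
 \phi=\frac1{|H_L|}\sum_{h\in H_L}\rho(h)^{-1}\circ h .
\]
Then $D_0\phi=\mathrm{id}$, so $\phi$ is a local biholomorphism, and a direct computation gives $\phi\circ h=\rho(h)\circ\phi$. Faithfulness of $\rho$ follows from the same conjugation, since a finite-order germ tangent to the identity is the identity after linearization. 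Hence $\CC^r/H_L\simeq\CC^r/\rho(H_L)$ as germs.

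\emph{Reeb stability.} Choose an $H_L$-invariant transversal disc $T$ through $x_0\in L$. Let $\widetilde L\to L$ be the finite holonomy covering with deck group $H_L$. Compactness of $L$, finiteness of holonomy, and the holomorphic transverse structure give a saturated neighborhood
\[
 N\simeq(\widetilde L\times T)/H_L
\]
of $L$, foliated by the compact leaves $(\widetilde L\times\{t\})/H_L$. Shrinking $T$, we may assume $N\subset U$. Because $U$ is open and saturated by compact leaves and every class in $U$ is a single leaf (Theorem~\ref{thm:main}), $N$ is $\sim$-saturated. Since $q$ is closed, $q(N)$ is an open neighborhood of $q(L)$. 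The map $T\to q(N)$ is surjective, and two points of $T$ have the same image exactly when they lie in one $H_L$-orbit. So $q(N)$ is homeomorphic to $T/H_L$.

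\emph{Structure sheaves; the main obstacle.} The homeomorphism is not enough: the analytic structures must agree. By Theorem~\ref{thm:analytic} and the Cartan remark,
\[
 \mathcal O_Q(W)=\mathcal O_M(q^{-1}(W))
\]
for $W$ open. A holomorphic function on a saturated open subset of $N$ is constant on the compact connected leaves. Its restriction to $T$ is therefore holomorphic and $H_L$-invariant. Conversely, an $H_L$-invariant holomorphic function on $T$ lifts to $\widetilde L\times T$ as a function independent of the first factor, and descends holomorphically to $N$. Thus $\mathcal O_Q|_{q(N)}\simeq(\pi_*\mathcal O_T)^{H_L}$, which is the structure sheaf of the analytic quotient $T/H_L$ (Cartan). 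Combined with the linearization, this gives $(Q,q(L))\simeq(\CC^r/H_L,0)$ with $H_L$ acting through $\rho$. The delicate step is ensuring that the Reeb neighborhood is saturated for $\sim$ and lies in $U$; I would obtain it from the closedness of $q$ and openness of $U$.
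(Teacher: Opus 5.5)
Your proposal is correct and follows the paper's proof. The paper uses the same averaging map $\frac{1}{|H_L|}\sum_h\rho(h)^{-1}\circ h$ to linearize the holonomy, and it identifies the germ of $Q$ at $q(L)$ with $T/H_L$ by matching invariant holomorphic functions through $\mathcal O_Q=q_*\mathcal O_M$; you simply give more detail on both steps. One small correction: the Reeb model $N\simeq(\widetilde L\times T)/H_L$ is in general only a transversely holomorphic foliated diffeomorphism, not a biholomorphism (the leaves can vary in complex structure, as in a non-isotrivial elliptic fibration), so the descent of an invariant $f$ should rest on the fact that the leaf projection from the holonomy cover onto $T$ is holomorphic, since it is locally given by holonomy transport in foliated charts.
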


\begin{proof}
Step~1 of the proof of Theorem~\ref{thm:analytic} gives the transverse quotient germ. Its analytic structure agrees with that of $Q$: both have the sheaf of holomorphic functions whose pullbacks are holomorphic and constant on the leaves.

Choose transverse coordinates centered at the fixed point and define the holomorphic germ
\[
 F(z)=\frac{1}{|H_L|}\sum_{h\in H_L}\rho(h)^{-1}h(z).
\]
Each summand has derivative $I$ at the origin, so $D_0F=I$. For $g\in H_L$, reindexing the sum by $u=hg$ gives
\[
 F(g(z))
 =\frac{1}{|H_L|}\sum_{u\in H_L}\rho(g)\rho(u)^{-1}u(z)
 =\rho(g)F(z).
\]
The holomorphic inverse function theorem makes $F$ a local change of coordinates conjugating the action to $\rho$. Since the original action is effective, this identity also shows that $\rho$ is faithful.
\end{proof}

A \emph{complex pseudoreflection} is a nonidentity linear transformation fixing a hyperplane pointwise. The trivial group is regarded as generated by the empty set.

\begin{corollary}[Smoothness criterion]\label{cor:holonomy-smoothness}
Identify $H_L$ with its derivative representation. The quotient $Q$ is smooth at $q(L)$ if and only if $H_L$ is generated by complex pseudoreflections. Consequently,
\[
 \Sing Q\setminus B
 =\{q(L):L\subset M\setminus\Sigma,\
 H_L\text{ is not generated by pseudoreflections}\}.
\]
\end{corollary}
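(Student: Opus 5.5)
The proof reduces the corollary to the Chevalley--Shephard--Todd theorem, applied through Proposition~\ref{prop:holonomy}. By that proposition, the germ $(Q,q(L))$ is biholomorphic to $(\CC^r/\rho(H_L),0)$, with $\rho$ faithful, so $H_L$ may be regarded as a finite subgroup of $\operatorname{GL}(r,\CC)$. Smoothness of $Q$ at $q(L)$ is therefore the same as smoothness of the germ of the linear quotient $\CC^r/H_L$ at the image of the origin. The first assertion is then the classical statement that, for a finite linear group $H\subset\operatorname{GL}(r,\CC)$, the germ $(\CC^r/H,0)$ is smooth if and only if $H$ is generated by pseudoreflections. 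The trivial group is included, since it is generated by the empty set and gives $\CC^r$.

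For the implication ``generated by pseudoreflections $\Rightarrow$ smooth'', I will cite Chevalley--Shephard--Todd. In that case $\CC[z]^H$ is a polynomial ring on $r$ homogeneous generators $f_1,\dots,f_r$, and $(f_1,\dots,f_r):\CC^r\to\CC^r$ induces a homeomorphism $\CC^r/H\to\CC^r$. I must then pass from polynomial to holomorphic invariants. Averaging over $H$ and expanding convergent power series by degree shows that every invariant holomorphic germ is a convergent power series in the $f_i$, so $\mathcal O_{\CC^r/H,0}\cong\CC\{y_1,\dots,y_r\}$. This convergence step is the only nontrivial passage from algebraic to analytic, and it is standard (it goes back to Cartan's work, which the paper already cites).

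For the converse, I will reduce to the subgroup $P\subset H$ generated by the pseudoreflections in $H$. This subgroup is normal, because conjugates of pseudoreflections are pseudoreflections. By the first direction, $\CC^r/P\cong\CC^r$ is smooth, and $H/P$ acts on it. The induced action contains no pseudoreflections and is free in codimension one. If $\CC^r/H=(\CC^r/P)/(H/P)$ were smooth at the origin, the map $(\CC^r/P,0)\to(\CC^r/H,0)$ would be a finite branched covering between smooth germs. Its branch locus has codimension at least two, so purity of the branch locus (Zariski--Nagata) forces it to be unramified. A smooth germ minus a set of codimension at least two is simply connected, so the covering is trivial and $H=P$. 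This purity argument is the main obstacle. An alternative is to argue through the local fundamental group of the link, which is $H/P$ when the quotient is smooth, and must be trivial.

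The displayed description of $\Sing Q\setminus B$ then follows from two facts. First, $Q\setminus B=q(M\setminus\Sigma)$, since $q^{-1}(B)=\Sigma$. Second, every point of $Q\setminus B$ is $q(L)$ for a compact regular leaf $L\subset M\setminus\Sigma$, by Theorem~\ref{thm:main}. The pointwise criterion applied at each such point gives the equality.
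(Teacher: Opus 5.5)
Your proposal is correct and follows the same route as the paper: Proposition~\ref{prop:holonomy} reduces the question to the linear quotient germ $(\CC^r/H_L,0)$, the local smoothness form of the Chevalley--Shephard--Todd theorem settles it, and $q^{-1}(B)=\Sigma$ together with Theorem~\ref{thm:main} gives the displayed identity. The only difference is that the paper simply cites the analytic local criterion, whereas you derive it from the polynomial version (using Cartan's convergence step) and prove the converse through the normal subgroup generated by pseudoreflections and a purity/simple-connectivity argument; that argument is sound, and either purity or simple connectivity alone would suffice there.
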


\begin{proof}
By Proposition~\ref{prop:holonomy}, the germ at $q(L)$ is the quotient of $(\CC^r,0)$ by the finite linear group $H_L$. The local smoothness form of the Chevalley--Shephard--Todd theorem states that this quotient germ is smooth exactly when the group is generated by complex pseudoreflections \cite{Chevalley,ShephardTodd}. Its equivalent invariant-polynomial formulation states that $\CC[z_1,\ldots,z_r]^{H_L}$ is a polynomial algebra exactly for such groups; see \cite{Chevalley,ShephardTodd}. Applying the local criterion at each leaf gives the displayed equality.
\end{proof}

The finite holonomy charts define a complex orbifold structure on $Q\setminus B$. Its isotropy groups are the effective holonomy groups of the leaves. Nontrivial isotropy can occur at a smooth point of the underlying analytic space. An extension across $B$ requires compatible finite-group uniformizing charts at its points.

Thus, if every compact regular leaf outside $\Sigma$ has trivial holonomy, or more generally holonomy generated by pseudoreflections, then
\[
 \Sing Q\subset B.
\]
Even with this additional assumption equality need not hold. In complex codimension one $r=1$, normality implies that $Q$ is a compact Riemann surface, so $\Sing Q$ is empty regardless of the presence of separatrices.

\begin{example}[A singular quotient without separatrices]\label{ex:holonomy}
Let $E$ be an elliptic curve, let $\tau\in E$ be a nonzero point of order two, and let $A$ be an abelian surface. The involution
\[
 \iota:E\times A\longrightarrow E\times A,
 \qquad(e,a)\longmapsto(e+\tau,-a)
\]
acts freely, because translation by $\tau$ has no fixed point on $E$. Therefore $M=(E\times A)/\langle\iota\rangle$ is a smooth compact K\"ahler threefold and we can choose the K\"ahler form to be invariant by $\iota$. The product foliation in the $E$ direction descends to a nonsingular holomorphic foliation of rank one. In particular, $S=\Sigma=\varnothing$.

Write $A[2]=\{a\in A:2a=0\}$ for the two-torsion subgroup. For $a\notin A[2]$, the leaf associated with $\{a,-a\}$ is an elliptic curve isomorphic to $E$, with area $\Vol(E)$. For $a\in A[2]$, its area is $\Vol(E)/2$, since the leaf is $E/\langle\tau\rangle$. Hence the leaf areas are uniformly bounded and all three hypotheses hold.

The leaf quotient is $Q=A/\{\pm1\}$. At each of the sixteen points of $A[2]$, the transverse holonomy is $\{\pm I\}\subset\operatorname{GL}(2,\CC)$. Its nonidentity element is not a pseudoreflection. The local invariant coordinates
\[
 u=z_1^2,\qquad v=z_1z_2,\qquad w=z_2^2
\]
satisfy $uw=v^2$, an ordinary double point. Thus $Q$ has sixteen singular points, although the foliation has no singularities and no separatrices.
\end{example}

The blown-up fibration in Section~7 has $\Sigma=C\cup E$ and smooth quotient $\PP^1$. Thus neither inclusion between $\Sing Q$ and $q(\Sigma)$ holds in general.

\section{Non-Hausdorff leaf spaces}

\subsection{A local normal crossing}
In the unit polydisc of $\CC^3$, the foliation
\[
 d(xy)=y\,dx+x\,dy=0
\]
has singular set $\{x=y=0\}$. Its regular leaves include $L_c=\{xy=c\}$ for $c\ne0$, together with the two distinct leaves $\{x=0,y\ne0\}$ and $\{y=0,x\ne0\}$. For $0<|c|<1$, the curve $xy=c$ in the bidisc is parametrized by $x\mapsto(x,c/x)$, with $|c|<|x|<1$. Its Euclidean area is
\[
 \int_{|c|<|x|<1}\left(1+\frac{|c|^2}{|x|^4}\right)dA(x)
 =2\pi(1-|c|^2)\leq2\pi.
\]
The product with the $z$-disc has four-dimensional volume at most $2\pi^2$. For fixed $a\ne0$ sufficiently small,
\[
 (a,c/a,0)\longrightarrow(a,0,0),\qquad
 (c/a,a,0)\longrightarrow(0,a,0)
\]
as $c\to0$. The two points on the left belong to the same leaf; the limits belong to different leaves. The ordinary leaf space is non-Hausdorff. The two limiting leaves belong to the same fiber $\{xy=0\}$. To get a compact manifold, notice that this local foliation defined in the unit polydisk can be extended to $\PP^3$.

\subsection{A compact projective example}
Blow up a point $p$ on the fiber over $0$ of the first projection of $\PP^1\times\PP^1$. The resulting surface $M$ is smooth and projective. The induced morphism
\[
 f:M\longrightarrow\PP^1
\]
has smooth rational fibers $F_t$ for $t\ne0$, and one reducible fiber
\[
 F_0=C\cup E,
\]
where $C$ is the strict transform of the original fiber and $E$ is the exceptional curve. They meet transversely at one point $q$, and locally $f(u,v)=uv$. The foliation by fibers has the distinct regular leaves $C\setminus\{q\}$ and $E\setminus\{q\}$.

For any K\"ahler form $\omega$,
\[
 \int_{F_t}\omega=\int_C\omega+\int_E\omega\qquad(t\ne0),
\]
so all regular leaf areas are uniformly bounded. The preceding local argument proves that the ordinary regular-leaf space is non-Hausdorff. Under $\sim$, the fiber $C\cup E$ is one class and the quotient is $\PP^1$. This example satisfies Theorem~\ref{thm:main}: $k=1$, $S=\{q\}$, and $\Sigma=C\cup E$. Taking a product with a compact K\"ahler manifold $Z$ and using the fibers of $f\circ\operatorname{pr}_M$ gives nonisolated singularities with $\dim S=\dim Z<1+\dim Z=k$.

\section{A counterexample for chains of intersecting closures}\label{sec:counterexample}

The following example has bounded leaf volumes and a nonclosed relation $\sim$. A limit of leaf closures contains a component supported in $S$.

Take
\[
 M=\PP^1_t\times\PP^1_x\times\PP^1_y
\]
with product K\"ahler form $\omega=\omega_t+\omega_x+\omega_y$, each factor having area $1$. On the finite $t$-chart define
\[
 X_t=\frac{\partial}{\partial x}+t\frac{\partial}{\partial y}.
\]
The vector field $\partial/\partial x$ is global on $\PP^1_x$: in the coordinate $u=1/x$ it is $-u^2\partial/\partial u$. On the base chart $\tau=1/t$ use
\[
 Y_\tau=\tau\frac{\partial}{\partial x}+\frac{\partial}{\partial y}.
\]
On the overlap $Y_\tau=\tau X_t$, so the line fields agree. They define an integrable rank-one holomorphic subsheaf of $TM$. The singular set is the union of the three projective lines:
\begin{align*}
 S={}&\bigl(\PP^1_t\times\{\infty\}\times\{\infty\}\bigr)\\
 &\cup\bigl(\{0\}\times\{\infty\}\times\PP^1_y\bigr)\\
 &\cup\bigl(\{\infty\}\times\PP^1_x\times\{\infty\}\bigr).
\end{align*}
Which clearly is of codimension two. The local generators have no common divisorial factor in their coefficients, so their rank-one image is saturated. For example, near $t=0$, $x=\infty$, and finite $y$, a generator is $-u^2\partial_u+t\partial_y$. Thus the singularities persist after saturation.

For $t\ne0,\infty$, the affine leaves are
\[
 L_{t,c}=\{t\}\times\{y=tx+c\},\qquad c\in\CC.
\]
Their closures $C_{t,c}$ are smooth curves of bidegree $(1,1)$ and all pass through $q_t=(t,\infty,\infty)$. The two lines at infinity, with $q_t$ deleted, are also regular leaves. All their closures intersect at $q_t$. Therefore the entire fiber over each $t\ne0,\infty$ is one equivalence class.

At $t=0$, the regular leaves and their closures are
\[
 L_{0,c}=\{0\}\times(\PP^1_x\setminus\{\infty\})\times\{c\},
 \qquad
 C_{0,c}=\{0\}\times\PP^1_x\times\{c\},
 \qquad c\in\PP^1_y.
\]
The $C_{0,c}$ are pairwise disjoint. A chain of leaf closures cannot change its $t$-coordinate. Hence each $C_{0,c}$ is a distinct equivalence class. At $t=\infty$, the analogous description has the roles of $x$ and $y$ reversed. Every point of $M$, including every singular point, belongs to a regular leaf closure.

Every curve $C_{t,c}$ of bidegree $(1,1)$ has area $2$. The remaining leaf closures have area $1$. Removing the finitely many missing points does not change area. Thus
\[
 \Vol_2(L)\leq2\quad\text{for every regular leaf }L.
\]

Choose $t_j\ne0,\infty$ with $t_j\to0$. Then
\[
 p_j=(t_j,0,0)\sim r_j=(t_j,0,1),
\]
because the two leaf closures meet at $q_{t_j}$. On the other hand,
\[
 p_j\to p=(0,0,0),\qquad r_j\to r=(0,0,1),
\]
and $p\not\sim r$. The relation is not closed, so its quotient is not Hausdorff. Both limiting points are regular points of the foliation.

Use homogeneous coordinates $x=X_1/X_0$ and $y=Y_1/Y_0$. The curve $C_{t,c}$ has equation
\[
 X_0Y_1-tX_1Y_0-cX_0Y_0=0.
\]
At $t=0$ this becomes $X_0(Y_1-cY_0)=0$. Accordingly, as analytic cycles,
\[
 C_{t,c}\longrightarrow C_{0,c}+D,
 \qquad D=\{0\}\times\{\infty\}\times\PP^1_y.
\]
The curve $D$ lies entirely in $S$ and is not itself the closure of a regular leaf. It meets every $C_{0,c}$, but these intersections do not generate identifications under the specified relation because $D$ is not one of its generating leaf closures. Here $\dim_{\CC}S=k=1$, whereas Lemma~\ref{lem:limit} requires $\dim_{\CC}S<k$.

Thus a uniform leaf-volume bound alone does not imply closedness of the finite-chain relation. Here $\Sigma=M$ and $\HH^2(\Sigma)=\infty$.

\section{The local theorem and codimension one}

The local Alexander--Verjovsky theorem \cite{AV} concerns a $k$-dimensional foliation near an isolated singularity, with uniformly bounded leaf volumes. For the analytic quotient conclusion it also assumes finite $\HH^{2k}$-measure of the full separatrix set. The quotient identifies the whole separatrix set to one point. Its initial proposition permits a nonisolated singular set of dimension less than $k$.

The proof of Lemma~\ref{lem:closures} also applies locally, across an arbitrary analytic singular set. The local closure is analytic, although it need not be compact.

\begin{proposition}\label{prop:MM}
Let $\F$ be a germ of a codimension-one holomorphic foliation at $0\in\CC^n$, represented on a neighborhood $U$, with singular set of codimension at least two. Put $k=n-1$. Suppose each regular leaf in $U$ has finite $2k$-volume, and the union
\[
 \Sigma_0=\bigcup_{0\in\overline L^{\,U}}\overline L^{\,U}
\]
has finite $\HH^{2k}$-measure near $0$. Then $\F$ admits a nonconstant holomorphic first integral near $0$.
\end{proposition}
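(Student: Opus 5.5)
The idea is to use the local version of Lemma~\ref{lem:closures} to make $\Sigma_0$ a hypersurface germ, and then apply a Malgrange-type first-integral criterion for codimension-one foliations possessing an invariant hypersurface.

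\textbf{Step 1: $\Sigma_0$ is an invariant hypersurface.} By the local form of Lemma~\ref{lem:closures}, each regular leaf $L$ is closed in $U\setminus S$. Moreover, $\overline L^{\,U}$ is an irreducible analytic hypersurface of $U$ (pure dimension $k=n-1$). The argument of Proposition~\ref{prop:sigma} then carries over after shrinking $U$. Saturation together with finite $\HH^{2k}$-measure means that, in each foliation box of a smaller neighborhood, only finitely many plaques lie in $\Sigma_0$. Hence $\Sigma_0\setminus S$ is a closed analytic set of pure dimension $k$ with locally finite volume. Theorem~\ref{thm:extension} makes its closure an analytic hypersurface germ $(\Sigma_0,0)$. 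This germ has finitely many irreducible components at $0$, each invariant by $\F$. Nonemptiness is not automatic: I need at least one leaf closure through $0$. I would obtain it as in Corollary~\ref{cor:covered}, by taking local Bishop limits of leaf closures through points tending to $0$. Since $\operatorname{codim}S\ge2$ means $\dim S\le n-2<k$, the argument of Lemma~\ref{lem:limit} applies locally, and any limit component through $0$ is a leaf closure.

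\textbf{Step 2: a local equation and the form of $\alpha$.} Let $\alpha$ be a generator of the conormal sheaf, and let $f=f_1\cdots f_m$ be a reduced equation of $(\Sigma_0,0)$. Invariance gives $\alpha\wedge df_i=f_i\,\eta_i$ for holomorphic $2$-forms $\eta_i$. Thus $\alpha$ has a logarithmic-type relation along each $f_i$. A cleaner route is to use the fact that every leaf near $0$ has closure a hypersurface. Regular leaves are closed in $U\setminus S$ and have finite volume, so every nearby leaf is a closed hypersurface of $U\setminus S$ that extends across $S$. The foliation then has infinitely many invariant hypersurfaces near $0$: every point of $U$ lies on one, and none of these curves accumulate. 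By the local Darboux--Jouanolou type theorem (Cerveau--Lins Neto, Ghys), a codimension-one germ with infinitely many invariant irreducible hypersurface germs through $0$ has a meromorphic first integral $g/h$.

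\textbf{Step 3: from meromorphic to holomorphic.} The finite-measure hypothesis on $\Sigma_0$ excludes the dicritical situation. If $g/h$ were genuinely meromorphic with indeterminacy through $0$, every level hypersurface $\{g-ch=0\}$ would pass through $0$. Then $\Sigma_0$ would contain infinitely many distinct hypersurface germs, and its $\HH^{2k}$-measure near $0$ would be infinite. Hence, after cancelling common factors, $h$ is a unit near $0$, and $g/h$ is a nonconstant holomorphic first integral. I expect the main obstacle to be Step~2. To invoke the infinitely-many-invariant-hypersurfaces theorem I must check that the leaf closures give distinct irreducible germs at $0$, or at a nearby point where the theorem can be applied and then propagated. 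If this fails, my fallback is to extend $\F$ to a neighborhood of a small sphere and apply Malgrange's theorem away from $0$ to get local holomorphic first integrals there. I would then pass these across $0$ by Hartogs extension, using $n\ge3$ in the case $\operatorname{codim}S\ge3$.
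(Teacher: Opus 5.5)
Your Step~2 is where the argument breaks, and it carries all the weight.

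\textbf{The cited criterion cannot apply.} The criterion you quote needs infinitely many invariant irreducible hypersurface germs \emph{through $0$}. The invariant hypersurfaces you actually produce are the closures $\overline L^{\,U}$ of all leaves, and all but finitely many of them avoid $0$. Those through $0$ are exactly the closures making up $\Sigma_0$, and $\HH^{2k}(\Sigma_0)<\infty$ makes them finitely many. By monotonicity each has volume at least $\pi^k r^{2k}/k!$ in $B_r$, and distinct closures meet only in $S$, which is $\HH^{2k}$-null because $\dim S\le n-2<k$. This is the same count you use in Step~3. So under the proposition's hypotheses the criterion of Step~2 can never be invoked at $0$; Steps~2 and~3 exclude each other rather than combine.

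The quoted germ statement is also false without further hypotheses. For $\lambda\notin\mathbb Q$, the form $(y^3-xy)\,dx+(x^2+(\lambda-1)xy^2)\,dy$ becomes, after one blow-up, transverse to the exceptional divisor except at a linear saddle with eigenvalues $\lambda,-1$. It has a one-parameter family of separatrices but no meromorphic first integral, and cylinders over it do the same in $\CC^n$ with $\operatorname{codim}S=2$. Ghys's theorem concerns compact manifolds and does not localize to hypersurfaces in a ball that miss the origin.

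\textbf{The fallback does not repair this.} Malgrange's theorem gives first integrals only on small open sets, and nothing makes them glue to a single-valued function on a shell around $0$. That obstruction is the holonomy of the leaves, which is exactly what has to be controlled. When $\operatorname{codim}S\ge3$ at $0$, Malgrange applies at $0$ directly, so Hartogs adds nothing. In the essential case $\operatorname{codim}S=2$, including $n=2$, Malgrange needs a formal unit integrating factor, and Example~\ref{ex:unit-factor} shows the hypotheses do not supply one.

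\textbf{The missing ingredient.} You need a criterion whose hypotheses are what the volume assumptions give, namely Mattei--Moussu's Theorem~B. It states that an integrable one-form whose leaves are closed off the singular set, and which has at most countably many leaves adhering to $0$, has a nonconstant holomorphic first integral. This is the paper's proof: the local form of Lemma~\ref{lem:closures} gives closedness, and the disjointness and positive-measure count gives countably (indeed finitely) many leaves adhering to $0$. Your Step~1 and the count in Step~3 already provide these inputs, so invoking Mattei--Moussu in place of Steps~2--3 closes the gap. Your nonemptiness argument in Step~1 would need a uniform local volume bound, which is not assumed, but nonemptiness is not needed.
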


\begin{proof}
Finite volume implies closedness of leaves off the singular set and analyticity of their closures by the local version of Lemma~\ref{lem:closures}. Choose a ball $\overline{B_r}\subset U$ centered at $0$ on which $\Sigma_0$ has finite measure. Distinct leaf closures are disjoint off the singular set. Since $\dim S\leq n-2<k$, their intersections have zero $2k$-measure.

Each leaf adhering to $0$ contributes a positive measure subset of $\Sigma_0\cap B_r$. A finite measure set contains at most countably many pairwise disjoint positive measure measurable subsets. For example, those having measure at least $1/m$ are finite for each $m$, and their union over $m$ includes every positive measure member. Thus at most countably many leaves adhere to $0$.

Theorem B of Mattei--Moussu \cite[p.~473]{MM} applies in $\CC^n$: an integrable holomorphic one-form admits a nonconstant holomorphic first integral if its leaves are closed outside the singular set and at most countably many leaves adhere to the origin. The singular set may be nonisolated.
\end{proof}

The local volume monotonicity inequality gives a finite bound on the number of these leaves. Each pure $k$-dimensional analytic leaf closure through $0$ satisfies
\[
 \Vol_{2k}(\overline L^{\,U}\cap B_r)
 \geq \frac{\pi^k}{k!}r^{2k}
\]
for Euclidean volume. Additivity off the measure-zero singular set then bounds the number of such leaves by the total volume of $\Sigma_0\cap B_r$ divided by this lower bound.

For a nonconstant holomorphic first integral $f$ on a connected neighborhood, the quotient by the full fibers of $f$ is Hausdorff: $f$ is open and induces a homeomorphism from that quotient onto its image in $\CC$. In this quotient, regular components of the same singular fiber are identified.

\section{Relation with Malgrange's singular Frobenius theorems}\label{sec:malgrange}

Malgrange \cite{MalgrangeI,MalgrangeII} proves the existence of holomorphic first integrals near singular points under codimension assumptions on an integrable Pfaff system. Ramis \cite{Ramis} gives an account of these results and of the work of Mattei and Moussu. In codimension one, Malgrange's theorem implies the separatrix-measure condition under the hypotheses of Corollary~\ref{cor:malgrange-global}.

Throughout this section $p=n-k$ is the codimension of the foliation. The letter $c$ denotes the codimension of a singular set, and $q$ continues to denote the global quotient map.

\subsection{First integrals and invertible integrating factors}

For a holomorphic foliation germ $\F$ at $a$, a holomorphic first integral is a nonconstant holomorphic germ $h$ whose differential annihilates the tangent directions of $\F$ at regular points. In codimension one, if $\F$ is defined by a holomorphic one-form $\alpha$, this condition is
\[
 dh\wedge\alpha=0.
\]
It is weaker than the existence of germs $u,f$ such that
\begin{equation}\label{eq:malgrange-unit}
 \alpha=u\,df,\qquad u(a)\ne0.
\end{equation}
A holomorphic germ nonzero at $a$ is called a unit; it has a holomorphic multiplicative inverse near $a$. In \eqref{eq:malgrange-unit}, $u^{-1}\alpha$ is exact. Malgrange proves the existence of such a unit factor. Ramis adopts the  stronger usual convention for a first integral or ``int\'egrale premi\`ere'' in Definition~0.2 of his exposition.

For a codimension-one foliation on a smooth manifold, one may choose a local defining one-form with no common divisorial factor in its coefficients. This is a saturated local equation. A first integral $h$ can have a differential with a common factor vanishing on a divisor, even when the saturated equation has a singular set of codimension two. Removing that common factor preserves the foliation but need not preserve a representation of the form \eqref{eq:malgrange-unit}.

\begin{theorem}[Malgrange, codimension one]\label{thm:malgrange-one}
Let $\alpha$ be a nonzero germ of holomorphic one-form at $0\in\CC^n$, with
\[
 \alpha\wedge d\alpha=0,
 \qquad S(\alpha)=\{x:\alpha(x)=0\}.
\]
If $\operatorname{codim}S(\alpha)\ge3$, then $\alpha=u\,df$ for holomorphic germs $u,f$ with $u(0)\ne0$. If $\operatorname{codim}S(\alpha)\ge2$, the same conclusion holds provided there is a formal representation $\alpha=\widehat u\,d\widehat f$ with $\widehat u(0)\ne0$.
\end{theorem}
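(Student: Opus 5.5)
This is Malgrange's theorem, so the plan is to reconstruct the structure of his argument rather than derive it from the paper's volume machinery; nothing in the earlier sections reduces to it.

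\emph{Reduction to the formal problem.} First, for $\operatorname{codim}S(\alpha)\ge3$, I would produce a formal integrating factor: a formal series $\widehat f$ and a formal unit $\widehat u$ with $\alpha=\widehat u\,d\widehat f$. I expect to argue this by induction on the order of jets, using the de Rham--Saito division lemma. The lemma says that if $\beta\wedge\alpha=0$ and the codimension of the zero set of $\alpha$ is at least $2$, then $\beta=g\alpha$. The order-by-order obstructions to writing $\alpha$ as $u\,df$ live in the Koszul cohomology of the coefficients of $\alpha$. These groups vanish in degrees up to $\operatorname{codim}S-1$, and the degree needed is $2$, which is exactly where $\operatorname{codim}S\ge3$ enters. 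Concretely, $\alpha\wedge d\alpha=0$ gives $d\alpha=\alpha\wedge\theta$ at each order. Closedness of $\theta$, modulo $\alpha$, is then extracted from a second division step that requires depth $3$. Writing $\theta=d\log\widehat u$ formally then yields $\widehat u$.

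\emph{Convergence.} The second assertion, codimension $\ge2$ plus a formal solution, is the heart of the matter. The first assertion then follows from it together with the formal step above. To pass from a formal to a convergent solution, I would follow Malgrange's strategy of treating the problem as a nonlinear equation for the pair $(u,f)$. The linearization is the operator $(\dot u,\dot f)\mapsto \dot u\,df+u\,d\dot f$ acting into forms $\beta$ with $\beta\wedge\alpha\wedge$ (integrability-compatible) constraints. I would establish that this complex has a "privileged neighbourhood" division theory: for polydiscs of radius $\rho$, the division by $\alpha$ and the inversion of $d$ on the relevant exact/closed forms admit bounded right inverses with loss controlled by powers of the radius shrink. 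These are Grauert--Oka type estimates, which are available because the ideal generated by the coefficients is coherent and of depth $\ge2$. Then a Nash--Moser/Newton iteration, starting from a high-order truncation of the formal solution, converges on a shrinking sequence of polydiscs.

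\emph{Main obstacle.} The hard step is the uniform division estimates with controlled loss near a nonisolated singular set of codimension only $2$. Newton's quadratic convergence must beat the loss in radius, so I would first try to derive these estimates from Cartan's closedness-of-modules theorem combined with Malgrange's "privileged polydisc" lemma, which gives a priori bounds for the division. If that fails, the fallback is a Siegel-free formulation. The formal solution already removes resonances, so I would reduce to proving that the ideal of relations is closed in the Fréchet topology and invoke Artin approximation for the system $\alpha=u\,df$, which is polynomial in the unknowns' jets. This gives a convergent $(u,f)$ agreeing with $(\widehat u,\widehat f)$ to first order, so that $u(0)=\widehat u(0)\ne0$.
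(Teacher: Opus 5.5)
Your formal step breaks at its last sentence. The two divisions give $d\alpha=\alpha\wedge\theta$ and $d\theta=\alpha\wedge\theta_2$, so $\theta$ is closed only modulo $\alpha$. That does not let you write $\theta$ as a logarithmic differential; you need a closed representative $\theta+g\alpha$. The equation $d(\theta+g\alpha)=0$ reads $\alpha\wedge(\theta_2+g\theta-dg)=0$, i.e.\ $dg=\theta_2+g\theta-h\alpha$ for new unknowns $g,h$. Its compatibility condition involves $d\theta_2$, hence a third form obtained by another division, and so on indefinitely. You do not say how an induction on jet order would resolve this recursion, and it is the whole difficulty.

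The missing idea is the paper's. Iterate the division, each step using only $H^2(K(\alpha))=0$, to obtain a full Godbillon--Vey sequence with $d\alpha_m=\sum_{j=0}^{m}\binom{m}{j}\alpha_j\wedge\alpha_{m-j+1}$. Then encode it in $\Theta=dt+\sum_{j\ge0}t^j\alpha_j/j!$ on $\CC^{n+1}$. These identities are exactly $\Theta\wedge d\Theta=0$, and $\Theta$ is nonsingular because its $dt$-coefficient is $1$. The ordinary Frobenius theorem, formal or convergent, then gives $\Theta=U\,dF$, whose restriction to $t=0$ is $\alpha=u\,df$.

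Your convergence step is not a proof. The Newton/Nash--Moser route stops at what you call the main obstacle, and it also lacks a mechanism. In codimension two the linearization $(\dot u,\dot f)\mapsto\dot u\,df+u\,d\dot f$ is not onto the forms allowed by linearized integrability. For $\alpha=d(xy)$ on $\CC^2$, where integrability is vacuous, $x\,dy$ is not of the form $a\,df+dg$. Applying $d$ would give $1=xa_x-ya_y$, but $x\partial_x-y\partial_y$ multiplies $x^iy^j$ by $i-j$, so its image has no constant term. You would have to explain why the error of a truncated formal solution always lies in the image, and you do not.

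The fallback is invalid. Artin approximation concerns analytic equations in the unknown functions, whereas $\alpha=u\,df$ involves derivatives of $f$. For differential equations, formal solutions need not be approximable by convergent ones: Euler's equation $x^2y'+y=x$ has the unique formal solution $\sum_{m\ge1}(-1)^{m-1}(m-1)!\,x^m$, which diverges. Promoting the partials of $f$ to independent unknowns only reintroduces the differential compatibility conditions.

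Finally, the paper never routes the codimension-three case through a formal-to-convergent theorem. Since $H^2(K(\alpha))=0$ holds in $\CC\{z_1,\ldots,z_n\}$, all $\alpha_j$ are holomorphic from the start, and the only analytic issue is their growth. That growth is controlled by bounded linear division operators on a common family of shrinking polydiscs, Cauchy estimates for the exterior derivatives, and a scalar majorant for $\alpha_j/j!$. Nonsingular Frobenius for $\Theta$ then finishes. The codimension-two assertion is only cited from Malgrange. By deriving the first assertion from the second, you made it depend on exactly the step you could not supply.
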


These are Theorems~0.1 and~0.2 of \cite[p.~163]{MalgrangeI}. Formal germs belong to $\CC[[z_1,\ldots,z_n]]$, whereas holomorphic germs belong to the ring of convergent series $\CC\{z_1,\ldots,z_n\}$. The second assertion supplies holomorphic germs representing the form; it does not assert convergence of every previously chosen formal first integral.

\subsection{The algebraic and analytic steps in Malgrange's proof}

Let $\mathcal O=\CC\{z_1,\ldots,z_n\}$ and let $\Omega^j$ be the $\mathcal O$-module of holomorphic $j$-form germs. The Koszul complex associated with $\alpha$ is
\[
 K(\alpha):\quad
 0\longrightarrow\mathcal O\xrightarrow{\alpha\wedge}\Omega^1
 \xrightarrow{\alpha\wedge}\Omega^2\longrightarrow\cdots
 \xrightarrow{\alpha\wedge}\Omega^n\longrightarrow0.
\]
The differential in this complex is $\delta(\eta)=\alpha\wedge\eta$, whereas $d$ denotes exterior differentiation. Since $\alpha$ is a one-form, skew-symmetry gives $\alpha\wedge\alpha=0$, and therefore $\delta^2=0$. If the coefficients of $\alpha$ vanish on a set of codimension $c$, the cohomology of this complex satisfies
\[
 H^j(K(\alpha))=0\qquad(j<c);
\]
see \cite[Proposition~2.1]{MalgrangeI}. Thus $c\ge3$ gives $H^2(K(\alpha))=0$. The integrability condition $\alpha\wedge d\alpha=0$ says that $d\alpha$ is a degree-two cocycle for $\delta$. The vanishing of $H^2(K(\alpha))$ therefore yields a holomorphic one-form $\alpha_1$ with
\[
 d\alpha=\alpha\wedge\alpha_1.
\]
Differentiating gives $\alpha\wedge d\alpha_1=0$, hence $d\alpha_1=\alpha\wedge\alpha_2$ for another holomorphic one-form. The recursive construction gives forms $\alpha_0=\alpha,\alpha_1,\ldots$ satisfying
\[
 d\alpha_m=\sum_{j=0}^{m}\binom mj
                 \alpha_j\wedge\alpha_{m-j+1}.
\]
Such a sequence is called a Godbillon--Vey sequence.

Introduce one additional variable $t$ and consider
\[
 \Theta=dt+\sum_{j\ge0}\frac{t^j}{j!}\alpha_j.
\]
The identities give $\Theta\wedge d\Theta=0$, formally. The form $\Theta$ is nonsingular because its $dt$ coefficient is one. Once the series converges, ordinary Frobenius supplies a unit $U$ and a holomorphic $F$ with $\Theta=U\,dF$; restriction to $t=0$ proves \eqref{eq:malgrange-unit}.

To prove convergence, Malgrange constructs linear solution operators for the division equations with bounds on a common family of shrinking polydiscs. Cauchy estimates control differentiation, and a convergent scalar majorant controls the recursively chosen coefficients $\alpha_j/j!$. This turns the formal construction of Martinet and Moussu into an analytic one. In higher codimension, generalized Koszul complexes supply the division statements, and estimates uniform in the degree replace the one-form estimates; see \cite[Sections~1, 4--6]{MalgrangeII} and \cite[Sections~I, III--IV]{Ramis}.

\subsection{First integrals in codimension two}

For a codimension-one foliation, $k=n-1$, so the hypothesis $\dim S<k$ is precisely
\[
 \operatorname{codim}S\ge2.
\]
Malgrange gives a unit integrating factor wherever the local singular set has codimension at least three. At codimension-two singularities, Proposition~\ref{prop:MM} gives a holomorphic first integral from finite leaf volumes and finite measure of the separatrices through the point, using the closed-leaf criterion of Mattei--Moussu. The latter conclusion need not include a unit integrating factor.

\begin{example}[A first integral without a unit integrating factor]\label{ex:unit-factor}
On the bidisc $\Delta^2\subset\CC^2$, consider
\[
 \alpha=2y\,dx+x\,dy,
 \qquad h=x^2y.
\]
The coefficients of $\alpha$ have no common factor, $S(\alpha)=\{0\}$, and
\[
 dh=x\alpha.
\]
Thus $h$ is a holomorphic first integral. Nevertheless $\alpha$ has no representation $\alpha=u\,df$ with $u(0)\ne0$: such a representation would give $df(0)=0$, hence
\[
 d\alpha(0)=du(0)\wedge df(0)=0,
\]
whereas $d\alpha=-dx\wedge dy$. The same calculation excludes a formal unit integrating factor.

This example satisfies the local volume assumptions. For $0<|t|<1$, the leaf $h=t$ is parametrized by
\[
 x\longmapsto (x,t/x^2),\qquad \sqrt{|t|}<|x|<1.
\]
Its Euclidean area is
\begin{align*}
 \int_{\sqrt{|t|}<|x|<1}
       \left(1+\frac{4|t|^2}{|x|^6}\right)dA(x)
 &=3\pi-\pi|t|-2\pi|t|^2\\
 &\le3\pi.
\end{align*}
The two remaining regular leaves are the punctured coordinate axes, each of area $\pi$. Their closures are the only separatrices through the origin, and their union has finite area. The two-form $-dx\wedge dy$ cannot equal $\alpha\wedge\beta$ with $\beta$ holomorphic, because every coefficient of $\alpha\wedge\beta$ vanishes at the origin.
\end{example}

Taking the product with a polydisc $\Delta^d$ gives nonisolated examples with leaf dimension $k=d+1$ and singular set of dimension $d$. The local volume bounds and finite measure of the two separatrix closures persist.

\subsection{A global consequence in codimension one}

\begin{corollary}\label{cor:malgrange-global}
Let $M$ be a connected compact K\"ahler manifold and let $\F$ be a codimension-one holomorphic foliation whose regular leaf volumes are uniformly bounded. If
\[
 \operatorname{codim}_M\Sing\F\ge3,
\]
then only finitely many regular leaf closures meet $\Sing\F$, and the quotient by finite chains of intersecting leaf closures is a compact Riemann surface. The quotient map is proper and holomorphic with connected fibers.

The same conclusion holds when the singular set has codimension at least two and, at every singular point, a saturated local defining one-form admits a formal unit integrating factor.
\end{corollary}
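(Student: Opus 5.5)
The plan is to reduce Corollary~\ref{cor:malgrange-global} to Theorems~\ref{thm:main} and~\ref{thm:analytic} with $k=n-1$. Hypothesis~(i) is assumed. Hypothesis~(ii) follows because $\operatorname{codim}S\ge3$, and in the second case $\operatorname{codim}S\ge2$; either way $\dim S\le n-2<k$. The real work is hypothesis~(iii), $\HH^{2k}(\Sigma)<\infty$. Once it holds, Theorem~\ref{thm:analytic} makes $Q$ a normal compact analytic space of dimension $n-k=1$, hence a compact Riemann surface. It also gives properness, holomorphy and connected fibers. Proposition~\ref{prop:sigma} then gives finiteness of the set of leaf closures meeting $S$.

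To prove (iii), I work locally at each $a\in S$. Theorem~\ref{thm:malgrange-one} gives, on a neighborhood $U_a$, a saturated defining form $\alpha=u\,df$ with $u$ a unit. In the second case one uses the assumed formal unit factor together with the codimension-two clause. The unit factor is important here, because $df$ then vanishes only on $S$, so $f$ has no critical points off $S$. Every regular leaf in $U_a$ lies in a level set of $f$. A leaf $L$ with $a\in A_L$ has plaques accumulating at $a$ along which $f$ is constant. By continuity that constant is $f(a)$, so near $a$ the leaf lies in $f^{-1}(f(a))$. This is a hypersurface in $U_a$, of pure dimension $k$, with locally finite $2k$-measure on a smaller ball $B_a$.

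Next I globalize. Suppose $A_L\cap S\ne\varnothing$. By Lemma~\ref{lem:closures}, $A_L\setminus S=L$ is closed in $M^\circ$ and has finite volume. Choose a point $a\in A_L\cap S$. Since $L$ is connected and meets $B_a$, it contains an irreducible component of $f^{-1}(f(a))\setminus S$ near $a$. Each local hypersurface $f^{-1}(f(a))\cap B_a$ has finitely many irreducible components. Compactness of $S$ gives finitely many balls $B_{a_1},\dots,B_{a_m}$ covering $S$. The main obstacle is to find the right neighborhood of each point of $A_L\cap S$: an arbitrary singular point $s\in A_L\cap S$ lies in some ball $B_{a_i}$, but the relevant level is $f_i(s)$, not $f_i(a_i)$. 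So every leaf meeting $S$ meets some $B_{a_i}$ near a point $s$ with $L\cap B_{a_i}\subset f_i^{-1}(f_i(s))$.

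To count such leaves, I first use that $f_i|_{S\cap B_{a_i}}$ is locally constant on connected components of $S$. This is because $df_i$ vanishes on $S$, as $\alpha=u\,df_i$. Hence $f_i(S\cap B_{a_i})$ is finite after shrinking. So only finitely many level hypersurfaces per ball are relevant, each with finitely many irreducible components, and each component is contained in a single leaf closure. Therefore only finitely many leaf closures meet $S$. Each has volume at most $C$ by (i), and Lemma~\ref{lem:closures} gives $\Vol_{2k}(A_L)=\Vol_{2k}(L)$. The union $\Sigma$ is thus a finite union of sets of finite $2k$-measure, which proves (iii). With this, the corollary follows from Theorems~\ref{thm:main} and~\ref{thm:analytic} as described above.
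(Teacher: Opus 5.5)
Your proposal is correct and follows essentially the paper's route. Malgrange's unit factor makes each local branch of a separatrix closure at $s\in S$ an irreducible component of a level set of a local first integral; a finite cover of $S$ leaves finitely many candidate components, each determining at most one leaf, so (iii) holds and Theorem~\ref{thm:analytic} applies. Your finiteness of $f_i(S\cap B_{a_i})$ merely replaces the paper's shrinking so that $S\subset f^{-1}(0)$, though only the branches of $A_L$ through $s$, not all of $L\cap B_{a_i}$, lie in $f_i^{-1}(f_i(s))$, which follows from the analyticity in Lemma~\ref{lem:closures} rather than connectedness of $L$.
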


\begin{proof}
Write $S=\Sing\F$ and $k=n-1$. If $S$ is empty, Theorem~\ref{thm:analytic} applies directly. Otherwise, by Theorem~\ref{thm:malgrange-one}, each $a\in S$ has a neighborhood on which a saturated defining form satisfies
\[
 \alpha=u\,df,\qquad u\ne0.
\]
Thus the critical locus of $f$ is precisely $S$ in that neighborhood. The function $f$ is constant on every irreducible germ of $S$ at $a$, because its differential vanishes there. All these germs pass through $a$, so after shrinking the neighborhood and subtracting $f(a)$, one has $S\subset f^{-1}(0)$.

Choose finitely many such neighborhoods covering the compact set $S$, together with smaller relatively compact neighborhoods still covering $S$. Each hypersurface $f^{-1}(0)$ has only finitely many irreducible components meeting the corresponding smaller neighborhood, by local finiteness of analytic irreducible components.

By Lemma~\ref{lem:closures}, every global regular leaf $L$ has a compact irreducible analytic closure $A_L$, and $A_L\setminus S=L$. Suppose $A_L$ meets $S$ at $s$ in one of the smaller neighborhoods. An irreducible local branch of $A_L$ through $s$ is not contained in $S$, since $\dim S<k$. On its regular part outside $S$, $f$ is constant along the leaf, and hence it is constant on the branch. Its value at $s$ is zero. The branch is therefore a component of $f^{-1}(0)$, since both have dimension $k$.

Distinct global leaves cannot use the same irreducible component of this hypersurface: at a regular point of the foliation on that component, its plaque belongs to a unique leaf, and analytic continuation along the component identifies the same global leaf. The finite collection of hypersurface components obtained from the cover therefore accounts for every global leaf whose closure meets $S$. There are only finitely many such leaves, say $L_1,\ldots,L_N$, and
\[
 \Sigma=\bigcup_{j=1}^N A_{L_j},\qquad
 \HH^{2k}(\Sigma)\le\sum_{j=1}^N\Vol_{2k}(A_{L_j})<\infty,
\]
with the usual Riemannian normalization of Hausdorff measure. The hypotheses of Theorem~\ref{thm:analytic} now hold. Its quotient is normal and one-dimensional, hence a compact Riemann surface. For the second assertion, use the formal-to-holomorphic assertion of Theorem~\ref{thm:malgrange-one} and the same proof.
\end{proof}

\subsection{Malgrange's theorem in arbitrary codimension}

A holomorphic Pfaff system of rank $p$ is here an ordered family of $p$ holomorphic one-form germs $\alpha_1,\ldots,\alpha_p$ which are linearly independent at generic points. Its singular set is
\[
 S(\boldsymbol\alpha)
 =\{x:(\alpha_1\wedge\cdots\wedge\alpha_p)(x)=0\}.
\]
The common kernels define a rank $k=n-p$ distribution outside this set. Frobenius involutivity there is equivalent to
\begin{equation}\label{eq:pfaff-involutive}
 d\alpha_i\wedge\alpha_1\wedge\cdots\wedge\alpha_p=0
 \qquad(1\le i\le p).
\end{equation}
When it holds on the regular locus, this identity of holomorphic forms holds everywhere.

\begin{theorem}[Malgrange, general case]\label{thm:malgrange-general}
Suppose \eqref{eq:pfaff-involutive} holds and
$\operatorname{codim}S(\boldsymbol\alpha)\ge3$. There are holomorphic germs $f_1,\ldots,f_p$ and an invertible holomorphic matrix $G=(g_{ij})$ such that
\[
 \alpha_i=\sum_{j=1}^p g_{ij}\,df_j,
 \qquad\det G(0)\ne0.
\]
The same conclusion holds with singular codimension at least two if such a representation exists formally with an invertible formal matrix.
\end{theorem}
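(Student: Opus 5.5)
The plan is to follow Malgrange's codimension-one strategy from the previous subsection, with the one-form $\alpha$ replaced by the decomposable $p$-form $\Omega=\alpha_1\wedge\cdots\wedge\alpha_p$. The rank-$p$ module $\mathcal A=\sum_i\mathcal O\,\alpha_i$ plays the role of $\mathcal O\alpha$.

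\textbf{Step 1: the division lemma.} The first step is a division statement replacing $H^2(K(\alpha))=0$. If a $2$-form $\beta$ satisfies $\beta\wedge\Omega=0$ and $\operatorname{codim}S(\boldsymbol\alpha)\ge3$, then $\beta\in\mathcal A\wedge\Omega^1$, that is, $\beta=\sum_i\alpha_i\wedge\gamma_i$ with $\gamma_i$ holomorphic. I would prove this from the depth estimate for the generalized Koszul complex. On $\mathcal O$ this complex is associated with the coefficients of $\Omega$, i.e.\ the $p\times p$ minors of the coefficient matrix of the $\alpha_i$. These minors vanish exactly on $S(\boldsymbol\alpha)$, which has codimension $c\ge3$. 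By the Eagon--Northcott / Buchsbaum--Eisenbud acyclicity criterion, the relevant cohomology vanishes in degrees below $c$. This is Malgrange's division in \cite{MalgrangeII}.

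\textbf{Step 2: the formal Godbillon--Vey system.} Apply Step 1 to $\beta=d\alpha_i$, which is allowed by \eqref{eq:pfaff-involutive}. This gives a matrix of one-forms $\Theta^{(1)}=(\theta_{ij})$ with $d\alpha=\Theta^{(1)}\wedge\alpha$, in vector notation. Differentiating shows that the new obstruction again lies in the kernel of $\wedge\Omega$, so the division can be iterated. One then introduces auxiliary variables: a matrix $T$ of $p^2$ parameters, or equivalently $p$ variables $t_j$ after Malgrange's reduction. The aim is to build a nonsingular integrable Pfaff system $\widetilde\alpha_i=dt_i+\sum_m(\text{polynomial in }t)\cdot\alpha^{(m)}$ on $\CC^n\times\CC^p$ that restricts to $\boldsymbol\alpha$ at $t=0$. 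Since this system has the $dt_i$ as leading terms, it is nonsingular. The ordinary Frobenius theorem then yields $\widetilde\alpha=\widetilde G\,dF$, and restricting to $t=0$ gives $\alpha_i=\sum_j g_{ij}\,df_j$ with $\det G(0)\ne0$.

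\textbf{Step 3: convergence.} This is the main obstacle: the recursive construction must produce convergent series. I would follow Malgrange's method. First, fix a shrinking family of polydiscs. Second, establish that the division of Step 1 is given by linear operators bounded uniformly on these polydiscs; this comes from a Banach-space version of the Koszul resolution, with constants independent of the recursion degree. Third, control the exterior derivatives with Cauchy estimates. The recursively chosen coefficients are then dominated by a convergent scalar majorant series of Nagumo type. The uniformity in degree is the delicate point, and here I would rely on \cite[Sections~4--6]{MalgrangeII}.

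\textbf{Step 4: the codimension-two case.} When only $\operatorname{codim}\ge2$ is assumed, Step 1 may fail at the $2$-form level. However, a formal representation $\alpha=\widehat G\,d\widehat f$ supplies formal solutions to every division problem. The plan is to combine this with Artin-type approximation, or with Malgrange's argument that a division problem solvable formally is solvable holomorphically by flatness of $\widehat{\mathcal O}$ over $\mathcal O$. This lets the recursion run with holomorphic solutions, and Step 3 then goes through unchanged.
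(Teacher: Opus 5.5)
The paper gives no argument of its own for this statement. It cites \cite[Theorem~3.1]{MalgrangeII} and \cite[Theorem~4.1]{Ramis}, adding only that generalized Koszul complexes supply the divisions and that degree-uniform estimates give convergence. Your outline has that shape, but Step~2 copies the codimension-one recursion, and for $p\ge2$ this is a genuine gap.

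Write the unfolding as $\widetilde\alpha_i=dt_i+\beta_i(x,t)$ with $\beta_i(x,0)=\alpha_i$. Integrability of this nonsingular system is exactly $d_x\beta_i=\sum_j\beta_j\wedge\partial_{t_j}\beta_i$. Once the parts of degree $\le N$ in $t$ are fixed, let $B_i$ be the unknown part of degree $N+1$. The order-$N$ equation is then
\[
 DB_i:=\sum_{j=1}^p\alpha_j\wedge\frac{\partial B_i}{\partial t_j}=R_i,
\]
where $R_i$ is homogeneous of degree $N$ in $t$, has $2$-form coefficients, and is determined by the lower orders. For $N=0$ this is your Step~1. For $N\ge1$, however, the coefficient of $t^m$ in $B_i$ enters the equations for every $t^{m-e_j}$ with $m_j\ge1$. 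So dividing each coefficient of $R_i$ separately by $\mathcal A$ does not solve the system; the quotients must be compatible.

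For example, take $p=2$ and a $1$-form $\delta$ with $\delta\wedge\Omega\ne0$. Both coefficients of $t_2\,\alpha_1\wedge\delta$ lie in $\mathcal A\wedge\Omega^1$, yet it is not of the form $DB$. Wedging the $t_1$-component of $DB=t_2\,\alpha_1\wedge\delta$ with $\alpha_1$, and the $t_2$-component with $\alpha_2$, forces $\delta\wedge\Omega=0$. So your claim that ``the new obstruction lies in the kernel of $\wedge\Omega$'' is true but is not the relevant cocycle condition. You need two further ingredients:
\begin{enumerate}
\item the Bianchi-type identity $DR_i=0$, obtained by applying $d_x$ to the integrability equations and using the lower-order equations;
\item exactness of $D$, acting on $t$-homogeneous polynomials with form coefficients, at the place of $t$-degree $N\ge1$ and form degree $2$.
\end{enumerate}
This complex is exact off $S(\boldsymbol\alpha)$. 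There the $\alpha_j$ extend to a coframe, and it becomes a polynomial de Rham complex in positive weight. The acyclicity lemma then gives exactness at its first three places when $\operatorname{codim}S(\boldsymbol\alpha)\ge3$. That is the content of Malgrange's generalized Koszul complexes. It does not follow from the Saito-type division in your Step~1, and your remark about $p^2$ parameters does not supply it.

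Step~4 has a similar omission. The formal representation gives one formal unfolding, but the order-$N$ obstruction depends on the convergent choices already made at lower orders. Flatness of $\widehat{\mathcal O}$ over $\mathcal O$ helps only once you show that formal extendability survives those choices. It does: two partial unfoldings whose degree-$N$ parts differ by a $D$-closed $C$ have $C=DA$, by exactness at the $1$-form place, for which codimension two suffices. They are then related by a change of coordinates $t\mapsto t+O(t^{N+1})$ built from $A$. But this has to be proved. Artin approximation does not apply directly, because the equations are differential. Finally, Step~3 does not go through ``unchanged'': in codimension two the solution operators must be bounded on images rather than on cocycle spaces, uniformly in $N$.
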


This is \cite[Theorem~3.1, p.~73]{MalgrangeII}, also stated in \cite[Theorem~4.1]{Ramis}. In this representation $S(\boldsymbol\alpha)$ is exactly the critical locus of $F=(f_1,\ldots,f_p)$. The regular leaves are the connected components of the regular parts of its fibers.

For an arbitrary singular foliation, the required presentation is an additional condition. Its conormal sheaf consists of holomorphic one-forms annihilating the regular tangent distribution. A local basis of a locally free conormal sheaf gives such a Pfaff system, with rank-drop set equal to $S$. Local freeness of this sheaf at singular points does not follow from the definition of a singular foliation.

\begin{corollary}\label{cor:malgrange-overlap}
Let $\F$ have leaf dimension $k$ and codimension $p\ge2$ near $a\in M$. Suppose $\dim S<k$ and that $\F$ has a local Pfaff presentation by $p$ one-forms whose rank-drop set is $S$. Then Malgrange's conclusion holds near $a$. This local conclusion requires neither a leaf-volume bound nor the finite-measure hypothesis on $\Sigma$.
\end{corollary}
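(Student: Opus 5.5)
The plan is to reduce the statement directly to Theorem~\ref{thm:malgrange-general}, the only real work being to check that its hypotheses are met.

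Take local one-forms $\alpha_1,\ldots,\alpha_p$ near $a$ that present $\F$ and have rank-drop set $S(\boldsymbol\alpha)=S$.

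\emph{Step 1: involutivity.} On $M^\circ$ the common kernel of the $\alpha_i$ is the tangent bundle of $\F$. This bundle is involutive by the definition of a singular foliation. Frobenius therefore gives \eqref{eq:pfaff-involutive} at every point of $M^\circ$ near $a$. Each left-hand side is a holomorphic $(p+2)$-form, and $M^\circ$ is dense because $S$ is a proper analytic subset. Hence the identity holds on the whole neighborhood by continuity, as noted after \eqref{eq:pfaff-involutive}.

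\emph{Step 2: codimension.} This is the step to watch. Since $p=n-k$,
\[
 \operatorname{codim}S=n-\dim S>n-k=p\ge2,
\]
so $\operatorname{codim}S\ge p+1\ge3$. Here the hypothesis $p\ge2$ is exactly what is needed. In codimension one the same count gives only $\operatorname{codim}S\ge2$, which is why Theorem~\ref{thm:malgrange-one} needs a formal factor in that case. The local dimension of $S$ at $a$ is bounded by the global one, so the inequality holds on every neighborhood of $a$. If $a\notin S$, the conclusion is simply the ordinary Frobenius theorem.

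\emph{Step 3: conclusion.} Theorem~\ref{thm:malgrange-general} now applies at $a$. It gives holomorphic germs $f_1,\ldots,f_p$ and an invertible matrix $G$ with $\alpha_i=\sum_j g_{ij}\,df_j$. As remarked after that theorem, the singular set is then the critical locus of $F=(f_1,\ldots,f_p)$, and the regular leaves are the components of the regular parts of its fibers.

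None of these steps uses a volume bound or $\HH^{2k}(\Sigma)$. The argument is purely local and uses only $\dim S<k$ together with the assumed Pfaff presentation. This is the final assertion of the statement.
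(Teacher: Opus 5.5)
Your proposal is correct and follows the paper's proof: the paper likewise derives $c=n-\dim S>n-k=p$, hence $c\ge p+1\ge 3$, gets the Pfaff involutivity identity from integrability, and applies Malgrange's general theorem. Your density argument for extending the identity across $S$ and your separate treatment of the case $a\notin S$ only spell out details the paper leaves implicit.
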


\begin{proof}
The dimension assumption gives
\[
 c=n-\dim S>n-k=p,
 \qquad\text{hence}\qquad c\ge p+1\ge3.
\]
Involutivity gives \eqref{eq:pfaff-involutive}, and Theorem~\ref{thm:malgrange-general} applies.
\end{proof}

The maximal minors of a $p\times n$ matrix have codimension at most $n-p+1=k+1$ when their zero set is nonempty; Malgrange recalls this determinantal bound at the beginning of \cite[Section~1]{MalgrangeII}. Thus at a singular point with such a presentation the assumptions force
\[
 p+1\le c\le k+1.
\]
In particular, if $p>k$, a singular foliation satisfying $\dim S<k$ cannot have this presentation at its singular points.

\begin{example}[The radial foliation and the conormal condition]\label{ex:radial-malgrange}
Let $n\ge3$ and let
\[
 X=\sum_{j=1}^n z_j\frac{\partial}{\partial z_j}
\]
generate the radial rank-one foliation on a ball $B_\varepsilon\subset\CC^n$. Its singular set is $\{0\}$, so $\dim S<k=1$ and $\operatorname{codim}S=n\ge3$. Nevertheless it has no nonconstant holomorphic first integral near zero. Indeed, writing a holomorphic germ as a sum of homogeneous terms $h=\sum_{m\ge0}h_m$, the equation $X(h)=0$ gives $m h_m=0$ for all $m\ge1$.

Consequently this foliation cannot be defined at the origin by $n-1$ holomorphic one-forms independent everywhere off zero. The determinantal bound also proves this directly: such a presentation would have singular codimension at most two. Thus a foliation defined by a generically decomposable higher-degree form need not admit the Pfaff presentation required in Theorem~\ref{thm:malgrange-general}.

Every regular leaf in the ball is a punctured complex line disc of area $\pi\varepsilon^2$. All leaf closures meet the origin, so the full separatrix set is the entire ball and has infinite $\HH^2$-measure. This example satisfies the uniform volume and dimension conditions but fails the finite-measure condition. Its finite-chain quotient is a point.
\end{example}

\subsection{The local algebra of first integrals}

Malgrange's second paper also gives a factorization result closely related to the sheaf of functions on a quotient. Let $F=(f_1,\ldots,f_p)$ be a holomorphic map germ with $F(0)=0$, generic rank $p$, and critical locus of codimension at least two. Then
\begin{equation}\label{eq:malgrange-composite}
 dg\wedge df_1\wedge\cdots\wedge df_p=0
 \quad\Longrightarrow\quad
 g=\varphi(f_1,\ldots,f_p)
\end{equation}
for a unique $\varphi\in\CC\{t_1,\ldots,t_p\}$; see \cite[Theorem~2.1.1, p.~70]{MalgrangeII}. The division lemma first writes $dg$ as a holomorphic linear combination of the $df_i$, and the theorem integrates this relation to a convergent composite function.

Under the conclusion of Theorem~\ref{thm:malgrange-general}, let $\mathcal I_{\F,0}$ be the ring of holomorphic first-integral germs together with constants. Equation~\eqref{eq:malgrange-composite} gives
\[
 \mathcal I_{\F,0}
 =\{\varphi\circ F:\varphi\in\CC\{t_1,\ldots,t_p\}\}
 \simeq\CC\{t_1,\ldots,t_p\}.
\]
Thus the local algebra of first integrals at one point is a regular analytic algebra. This does not imply that the global leaf quotient is smooth: functions defined near one point need not continue consistently around the entire leaf or around a union of intersecting closures.

For example, at a regular point, a transverse coordinate $z$ is a local first integral. If continuation around a compact leaf acts by $z\mapsto\zeta z$, where $\zeta$ has order $m$, only invariant germs descend around the whole leaf, and
\[
 \CC\{z\}^{\mu_m}=\CC\{z^m\}.
\]
In higher transverse dimension the invariant ring $\CC\{z_1,\ldots,z_p\}^{H_L}$ may be singular.

The relative de Rham complex has degree $j$ term
\[
 \Omega_F^j=\Omega^j\Big/\sum_{i=1}^p df_i\wedge\Omega^{j-1},
\]
with differential induced by exterior differentiation. If $c$ is the codimension of the critical locus, Malgrange proves
\[
 H^0(\Omega_F^\bullet)\simeq\CC\{t_1,\ldots,t_p\}
 \quad(c\ge2),\qquad
 H^j(\Omega_F^\bullet)=0\quad(1\le j\le c-2).
\]
See \cite[Theorems~2.1.1 and~2.2.1]{MalgrangeII}. The degree-zero statement identifies first integrals as functions of $F$. For the global quotient, the identity $\mathcal O_Q=q_*\mathcal O_M$ concerns functions on neighborhoods saturated by entire equivalence classes.

\subsection{Local first integrals and the full separatrix set}

Consider the map germ
\[
 F:\CC_s\times\CC^3_z\longrightarrow\CC^2,
 \qquad F(s,z)=\bigl(s,z_1^2+z_2^2+z_3^2\bigr).
\]
The Pfaff system $ds,d(z_1^2+z_2^2+z_3^2)$ defines a foliation of dimension $k=2$, with
\[
 S=\{z_1=z_2=z_3=0\},\qquad
 \dim S=1<2,
 \qquad\operatorname{codim}S=3.
\]
For each $a$, the punctured quadratic cone in $\{s=a\}$ is a regular leaf near its vertex, and its closure meets $S$ at $(a,0)$. Their union is
\[
 \Sigma=\{z_1^2+z_2^2+z_3^2=0\},
\]
a three-dimensional analytic set in the four-dimensional source. Hence it has infinite $\HH^4$-measure in every neighborhood of zero. The individual fibers have uniformly bounded local four-volume on smaller fixed neighborhoods: they are degree-two algebraic surfaces in the hyperplanes $s=a$, and the usual projective degree bound, together with comparison of Euclidean and Fubini--Study metrics on a compact chart, gives a uniform bound. Malgrange's local integrability and the dimension condition therefore do not imply the finite-measure assumption on the full separatrix set. The singular values form the curve $\{(a,0)\}$.

\subsection{First integrals on saturated neighborhoods}

For a point $b\in Q$ in Theorem~\ref{thm:analytic}, choose an analytic neighborhood $V$ admitting a holomorphic embedding $\iota:V\hookrightarrow\CC^N$. The components of
\[
 \iota\circ q:q^{-1}(V)\longrightarrow\CC^N
\]
are holomorphic functions constant on the regular leaves and separating the complete equivalence classes in $q^{-1}(V)$. They give a system of first integrals on a saturated neighborhood of the entire fiber $q^{-1}(b)$. At smooth points of $Q$ one can take $N=p$; at singular points an embedding generally requires more coordinates and equations among them. Every global holomorphic function on $M$ is constant, since $M$ is compact and connected.

\section{Closed equivalence relations}

Under (i) and (ii), Proposition~\ref{prop:weaker} gives a Hausdorff quotient when only finitely many classes meet $S$ and each is closed. The analytic construction in Theorem~\ref{thm:analytic} uses the stronger conclusion that $\Sigma$ is analytic of pure dimension $k$.

The smallest \emph{closed} equivalence relation containing all pairs of points on the same leaf closure always exists: it is the intersection of all closed equivalence relations containing those pairs, a nonempty family because it contains $M\times M$. Its quotient is compact Hausdorff. This closed relation can be strictly larger than the finite-chain relation and requires no volume hypothesis. Under the hypotheses of Theorem~\ref{thm:main}, it equals $\sim$.

\subsection*{Funding}
This work was supported by Proyecto PAPIIT, Direcci\'on General de Asuntos del Personal Acad\'emico, Universidad Nacional Aut\'onoma de M\'exico [grant number IN103324].

\subsection*{Acknowledgments}

The authors used Claude (Anthropic) and ChatGPT (OpenAI) for proofreading
and editorial assistance.

\bigskip
\noindent\textsc{Alberto Verjovsky}\\
IMATE Cuernavaca, UNAM, Av.~Universidad,\\
62210 Cuernavaca, Morelos, Mexico\\
\textit{Email:} \href{mailto:alberto@matcuer.unam.mx}{\texttt{alberto@matcuer.unam.mx}}

\medskip
\noindent\textsc{Carlos Eduardo Mart\'inez}\\
Unidad de Posgrado, UNAM, Cto.~de los Posgrados,\\
C.U., 04510 Mexico City, CDMX, Mexico\\
\textit{Email:} \href{mailto:cmartineza@ciencias.unam.mx}{\texttt{cmartineza@ciencias.unam.mx}}

\end{document}